\definecolor{r}{rgb}{0.9,0.3,0.1}
\definecolor{b}{rgb}{0.1,0.3,0.9}
\newtheorem{theo}{Theorem}[section]
\newtheorem{defin}[theo]{Definition}
\newtheorem{prop}[theo]{Proposition}
\newtheorem{coro}[theo]{Corollary}
\newtheorem{lemm}[theo]{Lemma}
\newtheorem{rem}[theo]{Remark}
\newcommand{\al}{\alpha}
\newcommand{\be}{\beta}
\newcommand{\Ga}{\Gamma}
\newcommand{\la}{\lambda}
\newcommand{\Om}{\Omega}
\newcommand{\ka}{\kappa}
\newcommand{\ep}{\epsilon }
\newcommand{\te}{\theta}
\newcommand{\De}{\Delta}
\newcommand{\pa}{\partial}
\newcommand{\R}{{\mathbb R}^n}
\newcommand{\ri}{\rightarrow}
\newcommand{\Rn}{{\mathbb R}^{n-1}}
\newcommand{\na}{\nabla}
\begin{document}
\baselineskip=18pt

\title[]{Boundary integral operator for the fractional Laplacian in the bounded  smooth domain}

\author{Tongkeun Chang}
\address{Department of Mathematics, Yonsei University \\
Seoul, 136-701, South Korea}
\email{chang7357@yonsei.ac.kr}

%\thanks{The author was supported by the
%National Research Foundation of Korea NRF}
\subjclass{Primary 45P05 ;
Secondary 30E25}

\keywords{boundary integral operator, layer potential, fractional Laplacian, bounded  domain}

\begin{abstract}
We study  the  boundary integral operator induced from the fractional
Laplace equation in a bounded smooth domain.   For $\frac12 < \al <1$, we show the bijectivity of the boundary integral operator $S_{2\al}: L^p(\pa \Om) \ri H^{2\al -1}_p(\pa \Om), \, 1 < p < \infty$.
As an application, we show the existence of the solution of  the boundary value problem of the fractional Laplace
equation.
\end{abstract}

\subjclass{Primary 45P05 ;
Secondary 30E25}

\keywords{boundary integral operator, layer potential, fractional Laplacian, bounded  domain}

\maketitle

\section{Introduction}
\setcounter{equation}{0}
In this paper, we study a  boundary integral operator defined on the
boundary of a  bounded domain in $\R, \,\, n \geq 3$. Let $\Om$ be a bounded
smooth domain and $\Ga_{2\al}(x) = c(n,2\al) \frac1{|x|^{n-2\al}} $ be the Riesz kernel of order $2\al, \,\, 0 < 2\al < n$  in $\R$, where $c(n, 2\al)$ is the normalized constant.
The layer potential of a fractional Laplacian for  $\phi \in L^p(\pa
\Om), \,1 < p < \infty$ is defined by
\begin{align}\label{boundary integral operator}
{\mathcal S}_{2\al} \phi (x) = \int_{\pa \Om}   \Ga_{2\al} (x - Q) \phi(Q) dQ,\qquad x \in \R \setminus \pa \Om.
\end{align}
The boundary integral operator
\begin{align}\label{boundary-restriction}
S_{2\al}\phi = ({\mathcal S}_{2\al} \phi)|_{\pa \Om}
\end{align}
 is defined by the  restriction  of  ${\mathcal S}_{2\al} \phi$ over $\pa \Om$.

M. Z\" ahle studied the Riesz potentials in  a general
metric space $(X, \rho)$ with Ahlfors $d$-regular measure $\mu$. She
showed $S_{2\al}: L^2(X, d\mu) \ri L^2_{2\al}(X, d\mu),$  $0 < 2\al < n$
is invertible,  where $L^2(X, d\mu)$ is decomposed by the null space
$N(S_{2\al}) $ and  the orthogonal compliment of $N(S_{2\al})$, that is, $L^2(X,
d\mu) = N(S_{2\al})   \otimes L^2_{2\al}(X, d \mu)$ (see \cite{Z} and  \cite{Z2}).

The author  of \cite{C} showed that the boundary integral operator $S_{2\al}$ defined
in \eqref{boundary-restriction} is extended to $H_2^{-\al +\frac12}(\pa \Om), \,\,  \frac12 < \al <1$ such that
$
S_{2\al} : H_2^{-\al +\frac12}(\pa \Om) \ri H_2^{\al -\frac12}(\pa \Om)
$
is bijective operator and ${\mathcal S}_{2\al} \phi \in \dot H_2^\al(\R)$ for
$\phi \in H_2^{-\al +\frac12}(\pa \Om)$ (see section \ref{function spaces} for the definition of the function spaces).

When $2\al =2$, $\Ga_2$ is the fundamental solution of the Laplace equation in
$\R$ and \eqref{boundary integral operator} is a single layer
potential of the Laplace equation. The single layer potential  and
boundary layer potential of the Laplace equation  were studied by many
mathematicians to show the existence of the solution of the boundary value problem of the
Laplace equation in a bounded domain (see \cite{FJR}, \cite{FMM},
\cite{JK} and  \cite{V}).

The first   result of this paper is  the  following Theorem.

\begin{theo}\label{theo1}
Let  $\Om$ be a bounded $C^2$-domain in $\R, \,\, 3 \leq n$. Let $\frac12 < \al <1$ and $1 < p < \infty$.
Then,
$S_{2\al} : L^p(\pa\Om) \ri H_p^{2\al-1} (\pa \Om)$ is bijective.
\end{theo}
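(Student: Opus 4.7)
The plan is to split the proof into three standard pieces---boundedness, Fredholm index zero, and injectivity---and then combine them. For boundedness I would cover $\pa\Om$ by finitely many $C^{2}$ coordinate patches and choose a subordinate partition of unity. In each patch $\pa\Om$ is the graph of a $C^{2}$ function $\psi:U\subset\Rn\to\mathbb R$ with $\psi(0)=0$ and $\na\psi(0)=0$, so for $P=(y,\psi(y))$ and $Q=(y',\psi(y'))$ one has
\begin{align*}
|P-Q|^{2}=|y-y'|^{2}\bigl(1+O(|y-y'|^{2})\bigr).
\end{align*}
Combining this with the surface-measure Jacobian, the localized piece of $S_{2\al}$ reduces, modulo a smoother remainder, to the Riesz potential of order $2\al-1$ acting on compactly supported functions on $\Rn$, which is bounded $L^{p}\to H_{p}^{2\al-1}$ for every $1<p<\infty$. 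Summing over patches yields the required mapping property on $\pa\Om$.

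\textbf{Fredholm index zero.} The remainder kernels picked up in the local reduction carry an extra factor of $|y-y'|^{2}$ coming from the $C^{2}$ Taylor expansion of $\psi$, and hence correspond to fractional integrals of strictly higher order than $2\al-1$. Composed with the compact embedding $H_{p}^{2\al-1+\de}(\pa\Om)\hookrightarrow H_{p}^{2\al-1}(\pa\Om)$ on the compact manifold $\pa\Om$, they produce compact operators from $L^{p}(\pa\Om)$ into $H_{p}^{2\al-1}(\pa\Om)$. Thus $S_{2\al}$ is the sum of an isomorphism (the globalized model Riesz operator) and a compact operator, so it is Fredholm of index zero and bijectivity reduces to injectivity.

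\textbf{Injectivity and the main obstacle.} Suppose $\phi\in L^{p}(\pa\Om)$ satisfies $S_{2\al}\phi=0$. For $p\ge 2$ we have $L^{p}\subset L^{2}\subset H_{2}^{-\al+1/2}$ (since $-\al+1/2<0$), and the bijectivity on the $L^{2}$-Sobolev scale quoted from \cite{C} forces $\phi=0$. For $1<p<2$ I would bootstrap using the local reduction: writing $QS_{2\al}=I-K$ for a parametrix $Q$ of $S_{2\al}$, with $K$ a smoothing (hence compact) remainder on $L^{p}(\pa\Om)$, any $\phi$ in the kernel satisfies $\phi=K\phi$, which after iteration upgrades $\phi$ into $L^{q}$ for some $q\ge 2$ and reduces to the previous case. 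The main obstacle is executing this bootstrap cleanly under only $C^{2}$-regularity of $\pa\Om$, where the standard smooth pseudodifferential calculus is unavailable: the mapping and compactness properties of the error kernels produced by the $C^{2}$ flattening, and the parametrix identity derived from them, must be verified by hand.
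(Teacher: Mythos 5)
Your overall strategy (isomorphism $+$ compact, then injectivity via the $L^2$-theory of \cite{C}) is reasonable in outline, but the central step is not justified and, as stated, would fail. You claim that after flattening a $C^2$ coordinate patch the error kernels ``carry an extra factor of $|y-y'|^{2}$'' and are therefore fractional integrals of strictly higher order, hence compact from $L^{p}(\pa\Om)$ into $H^{2\al-1}_{p}(\pa\Om)$. That is not what the $C^2$ Taylor expansion gives: writing $P=(y,\psi(y))$, $Q=(y',\psi(y'))$ with $|\na\psi|\leq c\ep$ on a patch of size $\ep$, one has $|P-Q|^{2}=|y-y'|^{2}\bigl(1+O(\ep^{2})\bigr)$, so the difference between the true kernel $|P-Q|^{-(n-2\al)}$ and the flat model $|y-y'|^{-(n-2\al)}$ is only $O(\ep^{2})\,|y-y'|^{-(n-2\al)}$: it has the \emph{same} order of singularity, with a small constant. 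Such errors are small in operator norm (small with the patch size), not smoothing, so they are not compact, and the ``isomorphism $+$ compact'' decomposition of $S_{2\al}$ itself does not follow. Moreover the purported isomorphism, a ``globalized model Riesz operator'' on $\pa\Om$, is never identified: the flat Riesz potential $I_{2\al-1}$ on $\Rn$ glued by a partition of unity is not an operator whose invertibility $L^{p}\ri H^{2\al-1}_{p}$ is known, and the parametrix identity $QS_{2\al}=I-K$ with $K$ smoothing, on which your injectivity bootstrap for $1<p<2$ also rests, is exactly the point you defer (``must be verified by hand''). So both the Fredholm step and the $1<p<2$ injectivity step are assumed rather than proved.

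The paper closes this gap with a different device that your proposal is missing: it never tries to invert $S_{2\al}$ by a parametrix, but composes it with $S_{3-2\al}$ and uses the exact semigroup identity for Riesz kernels on flat slices, $\int_{\Rn}\Ga_{2\al}(x'-z',0)\,\Ga_{3-2\al}(y'-z',0)\,dz'=\Ga_{2}(x'-y',0)$, to prove $S_{2\al}S_{3-2\al}=S_{2}+T^{1}+T^{2}$ where $T^{1}$ has norm $O(\ep)$ (this is where the smallness, not compactness, of the flattening errors is used, via Coifman--McIntosh--Meyer type bounds) and $T^{2}$ is genuinely smoothing, coming only from the off-diagonal cutoff pieces. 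Since $S_{2}:L^{p}\ri H^{1}_{p}$ is invertible (Verchota), $S_{2}+T^{1}$ is invertible for small $\ep$, and the composition is Fredholm of index zero. Bijectivity for $p\geq p_{0}=\frac{2(n-1)}{n-2+2\al}$ then follows by bootstrapping the $H^{-\al+\frac12}_{2}$ solution from \cite{C} through this identity (note the paper's injectivity uses the sharper embedding $L^{p_{0}}\subset H^{-\al+\frac12}_{2}$, so it reaches below $p=2$), and the range $1<p<2$ is handled not by iterating a smoothing operator but by duality ($S_{2\al}^{*}=S_{2\al}$ gives injectivity on $L^{p}$ from surjectivity on $L^{q}$, $q=p'$) together with the index-zero property of $S_{3-2\al}S_{2\al}$. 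If you want to salvage your outline, you would have to either prove the parametrix identity under $C^{2}$ regularity by hand, or adopt the composition-with-$S_{3-2\al}$ reduction to the classical harmonic single layer potential.
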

The function space $H_p^{2\al-1} (\pa \Om)$    is defined in the section \ref{function spaces}.

The layer potential for $\phi \in B^s_p(\pa \Om), \, s < 0, \, 1 < p < \infty$ is defined by
\begin{align}\label{def-single-dual}
{\mathcal S}_{2\al} \phi (x) = <\phi, \Ga_{2\al} (x - \cdot)>, \quad x \in \R \setminus \pa \Om,
\end{align}
where
$<\cdot, \cdot> $ is the duality paring between
$B^s_p (\pa \Om) $ and $B^{-s}_{p'}(\pa \Om), \,\, \frac1p + \frac1{p'} =1$.  In particular, if
$\phi \in L^p(\pa \Om)$, then ${\mathcal S}_{2\al} \phi$ is defined by
\eqref{boundary integral operator}.
The second result  is   the following Theorem.
\begin{theo}\label{theo0905}
Let   $ \frac12 < \al < 1$ and $1 < p < \infty$. For $\phi \in B_p^{s }(\pa \Om)$, let
$u = {\mathcal S}_{2\al} \phi$ be a layer potential defined in
\eqref{def-single-dual}.
Let  $-2\al +1 -\frac1p < s < 0$. Then, $u \in B^{s+2\al -1 +\frac1p}_{loc,p} (\R)$ and
\begin{align}\label{0904}
\| u\|_{B^{s + 2\al -1 +\frac1p}_p(B_R)} \leq c_R \| \phi \|_{B_p^{s}(\pa \Om)},
\end{align}
where $B_R$ is open ball in $\R$ whose radius is $R$ and center is origin such that $\Om \subset B_R$.
Moreover, if $p > \frac{n-1}{n+s -1}$, the $u \in \dot{B}^{s+2\al -1 +\frac1p}_{p} (\R)$ such that
\begin{align}\label{0904-2}
\| u\|_{\dot B^{s + 2\al -1 +\frac1p}_p(\R)} \leq c \| \phi \|_{B_p^{s}(\pa \Om)}.
\end{align}
\end{theo}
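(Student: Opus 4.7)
The plan is to realize $u = \mathcal{S}_{2\al}\phi$ as the Riesz potential of the surface distribution $\phi\,d\si_{\pa\Om}$, and then split the estimate into two steps: first transferring Besov regularity from $\pa\Om$ into $\R$, and then applying the $2\al$-lifting by Riesz convolution. First I would verify the representation $u(x) = \big(\Ga_{2\al} * (\phi\,d\si_{\pa\Om})\big)(x)$ for $x \in \R\setminus \pa\Om$ by unwinding the duality bracket in \eqref{def-single-dual}; for smooth $\phi$ this reduces to \eqref{boundary integral operator}, and the general case follows by density of $C^\infty(\pa\Om)$ in $B_p^s(\pa\Om)$.

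Next I would quantify how the surface measure transports Besov regularity. The trace operator $T : B^{\tau}_{p'}(\R) \to B^{\tau-1/p'}_{p'}(\pa\Om)$ is bounded and surjective for $\tau > 1/p'$, and its adjoint $T^*$ acts as ``multiplication by $d\si_{\pa\Om}$''. Choosing $\tau = 1/p' - s > 1/p'$ (which uses $s<0$) and dualizing yields
\[
\|\phi\,d\si_{\pa\Om}\|_{B^{s - 1 + 1/p}_p(\R)} \;\leq\; C\,\|\phi\|_{B^s_p(\pa\Om)}.
\]
With this surface-to-bulk estimate in hand, I would invoke the mapping properties of the Riesz potential: on the homogeneous scale $I_{2\al}$ is the Fourier multiplier $|\xi|^{-2\al}$ and realizes the isomorphism $\dot B^t_p(\R) \to \dot B^{t+2\al}_p(\R)$, while locally, for compactly supported inputs, it lifts inhomogeneous Besov regularity by $2\al$ on every ball. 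Applied with $t = s - 1 + 1/p$ this gives \eqref{0904}: the hypothesis $s > -2\al + 1 - 1/p$ is exactly what makes the target index $s + 2\al - 1 + 1/p$ positive, so that $u|_{B_R}$ sits in a genuine Besov space of functions.

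The main obstacle is the homogeneous estimate \eqref{0904-2}. For a compactly supported distribution $f$, membership in $\dot B^t_p(\R)$ is subtle because the low-frequency Littlewood--Paley blocks of $f$ must be $\ell^p$-summable against a diverging dyadic scale, which forces a lower bound on $t$ depending on $n$ and $p$. The condition $p > (n-1)/(n+s-1)$ rearranges to $s > -(n-1)/p'$, which is precisely the Sobolev threshold ensuring that $\phi\,d\si_{\pa\Om}$ actually belongs to $\dot B^{s-1+1/p}_p(\R)$ and not merely to its inhomogeneous counterpart. I would carry out this last step via a direct Littlewood--Paley decomposition of $\Ga_{2\al}*(\phi\,d\si_{\pa\Om})$, exploiting the compactness of $\pa\Om$ and the scaling $\Ga_{2\al}(\la x) = \la^{-(n-2\al)}\Ga_{2\al}(x)$, to reduce matters to a low-frequency tail bound that converges precisely under $s > -(n-1)/p'$.
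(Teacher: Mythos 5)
Your argument for \eqref{0904} is essentially the dual formulation of the paper's: the paper tests $u$ against $f\in C^\infty_c(B_R)$, writes $\int u f = \langle\phi,\tilde I_{2\al}f|_{\pa\Om}\rangle$, and then uses the trace theorem plus the local lifting $\tilde I_{2\al}:B^{-s-2\al+1/q}_{q0}(B_R)\to B^{-s+1/q}_q(B_R)$ on the test function, whereas you apply the adjoint trace to put $\phi\,d\sigma_{\pa\Om}$ into $B^{s-1+1/p}_p(\R)$ and then lift by $2\al$ on the distribution side; these are the same estimate read from opposite ends of the duality bracket, so no real novelty or loss there (you only need to check the minor point that the compactly supported distribution $\phi\,d\sigma$ can be fed into the local lifting of Remark \ref{rem1027}, i.e.\ that it lies in $B^{s-1+1/p}_{p0}(B_R)$). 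Where you genuinely diverge is \eqref{0904-2}: the paper works entirely on the spatial side, using the pointwise decay $|D^\beta u(x)|\leq c\|\phi\|_{B^s_p(\pa\Om)}|x|^{-n+2\al-|\beta|}$ from \eqref{behavior2} and splitting the Gagliardo double integral into the regions $A_1,A_2,A_3$ (with a further near/far split of $A_3$ and the mean value theorem), the hypothesis $p>\frac{n-1}{n+s-1}$ entering only to make the far-field integrals converge; you instead propose a Fourier-side Littlewood--Paley argument in which the same hypothesis, rewritten as $s>-(n-1)/p'$, is identified as exactly the low-frequency summability threshold for $\phi\,d\sigma$ (blocks of size $\sim 2^{jn/p'}\|\phi\|_{B^s_p(\pa\Om)}$ as $j\to-\infty$) and the lifting is done by the homogeneous isomorphism $I_{2\al}:\dot B^{s-1+1/p}_p\to\dot B^{s+2\al-1+1/p}_p$. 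That computation does give the correct threshold, and your route is more conceptual in that it explains \emph{why} the exponent $\frac{n-1}{n+s-1}$ appears; the price is that you must invoke (or prove) the equivalence of the paper's difference-quotient definition of $\dot B^{t}_p$ (possibly with $t\in(1,2)$) with the Littlewood--Paley characterization, handle the realization-modulo-polynomials issue for the homogeneous lifting (harmless here since $u$ decays at infinity), and verify the low-frequency block bound by pairing $\phi$ with $\check\psi_j$ restricted to $\pa\Om$. The paper's argument is longer but completely elementary and self-contained, needing nothing beyond \eqref{0904} and the decay estimate \eqref{behavior2}.
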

The function spaces $B^{s+2\al -1 +\frac1p}_{loc,p} (\R)$ and  $\dot{B}^{s+2\al -1 +\frac1p}_{p} (\R)$    are defined in the section \ref{function spaces}.

The boundary integral operators (the single layer potential and the double layer potential)
have been studied by
many mathematicians. The bijectivity of the  layer potential has been used
to show the existence of the  solutions of the partial differential equations in a
bounded   domain or bounded   cylinder (see
\cite{B1}, \cite{B2}, \cite{DKV}, \cite{FKV}, \cite{HL},  \cite{M}
and \cite{She}).

As in the vast literature, we apply  the bijectivity of the
boundary  integral operator to the boundary value problem of the
fractional Laplace equation in the bounded  smooth domain. The
fractional Laplacian of order $0 < \al< 1$ of a function $v: \R \ri
{\mathbb R}$ is expressed by the formula
\begin{align*}
\De^\al v (x) =  C(n,\al) \int_{\R} \frac{v(x+y) - 2v (x) + v(x-y)}{|x
-y|^{n+2\al}} dy,
\end{align*}
where $C(n,\al)$ is some normalization constant.
The fractional Laplacian can also be defined as a pseudo-differential operator
\begin{align*}
\widehat{(-\De)^\al  v}(\xi) = (2\pi  | \xi|)^{2\al} \hat{v} (\xi),
\end{align*}
where $\hat{ v}(\xi) :=\int_{\R} v(x) e^{-2\pi i \xi \cdot x } dx, \,\, \xi \in \R$
is the Fourier transform of $v$ in $\R$. In
particular, when $2\al =2$  it is  the Laplacian
  $\De v(x) = \sum_{1\leq i \leq n} \frac{\pa^2 }{\pa
x_i^2}v(x)$.

\begin{defin}\label{1013-2}
Let  $ 0 <\al <  1$. We say that $v $ is a {\it weak solution} of  $\De^\al u =0 $ in $\R \setminus \pa \Om$
if  $v$  satisfies
\begin{align}\label{1013}
\int_{\R}  v(x) \De^{\al} \psi(x) dx =
\int_{\R} (2\pi  |\xi|)^{2\al} \hat{v}(\xi) \bar {\hat{\psi}} (\xi) d \xi =0
\end{align}
for all $\psi \in C_c^\infty(\R \setminus \pa\Om)$.
\end{defin}
In fact, if $u$ is weak solution, then $u$ is  continuous function in $\R \setminus \pa \Om$ and satisfies
\begin{align*}
\De^\al u(x) =0 \qquad \mbox{for } \qquad x \in \R \setminus \pa \Om.
\end{align*}
(See Theorem 3.9  in \cite{BB}).

For the application of Theorem \ref{theo1} and Theorem \ref{theo0905}, we show the existence of the solution of the boundary value problem of
the fractional Laplace equation in the bounded smooth domain.
\begin{theo}\label{theo2-1}
Let $\Om$ be a bounded $C^2$ domain in $\R, \,\,  3 \leq n$ and $\frac12 < \al <1$.
Let  $0< t < 1$ and $1 < p < \infty$.
Then, for given $g \in B^t_p(\pa \Om)$,
the following equation
\begin{eqnarray}\label{main result}
\left\{\begin{array}{l}
 \De^\al u =0   \quad \mbox{in} \quad \R \setminus \pa \Om,\\
u|_{\pa\Om} = g  \in B^t_p(\pa \Om),\\
u \in B^{t +\frac1p}_{loc, p}(\R),\\
|u(x)|   = O(|x|^{-n +2\al} ) \quad \mbox{near}\quad \infty.
\end{array}\right.
\end{eqnarray}
has a  weak solution.
And, if $ \frac{n-1}{n+t -2\al}< p < \infty$, then $u \in \dot{B}_p^{t+\frac1p}(\R)$.
 Furthermore, there exists  $\phi \in B_p^{t-2\al
+1  }(\pa \Om)$ such that
\begin{align}\label{single layer solution}
u = {\mathcal S}_{2\al} \phi.
\end{align}
\end{theo}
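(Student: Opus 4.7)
The approach is to seek $u$ in the single-layer form $u = {\mathcal S}_{2\al}\phi$ for a density $\phi \in B^{t-2\al+1}_p(\pa\Om)$, using Theorem~\ref{theo1} to arrange the Dirichlet condition and Theorem~\ref{theo0905} to obtain the interior regularity. Since $\Ga_{2\al}$ is the fundamental solution of $(-\De)^\al$, any such $u$ is automatically a weak solution of $\De^\al u = 0$ off $\pa\Om$: for $\psi \in C_c^\infty(\R\setminus\pa\Om)$ one has $\int \Ga_{2\al}(x-Q)\, \De^\al \psi(x)\,dx = \pm\psi(Q)$, which vanishes for every $Q \in \pa\Om$ by the support condition, so pairing against $\phi$ gives \eqref{1013}. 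The decay at infinity follows from compactness of $\pa\Om$: $|\Ga_{2\al}(x-Q)| \le C|x|^{-(n-2\al)}$ uniformly in $Q \in \pa\Om$ for $|x|$ large, and the same rate bounds every Besov seminorm of $Q \mapsto \Ga_{2\al}(x-Q)$ on $\pa\Om$, giving $|u(x)| = O(|x|^{-(n-2\al)})$ whether $\phi \in L^1$ or one uses the duality \eqref{def-single-dual}.

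The first substantive step is to solve $S_{2\al}\phi = g$ in the Besov scale. The case $g \in H^{2\al-1}_p$ is exactly Theorem~\ref{theo1}. Since the Riesz kernel is symmetric, the adjoint of $S_{2\al}\colon L^p \to H^{2\al-1}_p$ is $S_{2\al}$ itself acting as $H^{1-2\al}_{p'} \to L^{p'}$, which is therefore also bijective; replacing $p'$ by $p$ gives the companion isomorphism $S_{2\al}\colon H^{1-2\al}_p \to L^p$. Real interpolation between these two endpoints at parameter $\theta = t/(2\al-1)$ yields the bijection $S_{2\al}\colon B^{t-2\al+1}_p(\pa\Om) \to B^t_p(\pa\Om)$ on the subrange $0 < t < 2\al-1$. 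The complementary range $2\al-1 \le t < 1$ I would handle by viewing $S_{2\al}$ as an elliptic classical pseudodifferential operator of order $-(2\al-1)$ on the closed manifold $\pa\Om$, so that invertibility on a single scale together with standard elliptic regularity propagates the isomorphism to every Besov scale with the same index shift.

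Setting $\phi := S_{2\al}^{-1} g \in B^{t-2\al+1}_p(\pa\Om)$, I then apply Theorem~\ref{theo0905} with $s = t - 2\al + 1$. For $0 < t < 2\al-1$ the hypothesis $-2\al+1-\frac{1}{p} < s < 0$ holds, so the theorem directly yields $u \in B^{t+\frac{1}{p}}_{\mathrm{loc},p}(\R)$ as well as, under the threshold $p > \frac{n-1}{n+t-2\al}$, the homogeneous estimate \eqref{0904-2}. The range $t \ge 2\al-1$ places $\phi$ in a non-negative-order Besov space and lies outside the scope of Theorem~\ref{theo0905}, but the corresponding $B^s_p \to B^{s+2\al-1+\frac{1}{p}}_{\mathrm{loc},p}$ lifting then follows from the classical mapping properties of the Riesz potential combined with standard trace/extension theory on $\pa\Om$. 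The main obstacle is the Besov-scale extension of Theorem~\ref{theo1}; once it is in hand, the rest of the argument is bookkeeping with indices plus the tautology $u|_{\pa\Om} = S_{2\al}\phi = g$.
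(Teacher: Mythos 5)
Your overall strategy coincides with the paper's: solve $S_{2\al}\phi = g$ on the Besov scale, set $u = {\mathcal S}_{2\al}\phi$, and read off the weak equation, the decay, and the interior regularity. The weak-solution and decay arguments are fine in substance (the paper justifies the formal interchange by Fourier transform and density of $C^2(\pa\Om)$ in $B^s_p(\pa\Om)$, which you would also need since $\phi$ is in general only a distribution). The genuine gap is in the boundary isomorphism. Interpolating between your two endpoints, $S_{2\al}\colon L^p(\pa\Om)\ri H^{2\al-1}_p(\pa\Om)$ and its dual $S_{2\al}\colon H^{1-2\al}_p(\pa\Om)\ri L^p(\pa\Om)$, only produces the bijection $S_{2\al}\colon B^{t-2\al+1}_p(\pa\Om)\ri B^t_p(\pa\Om)$ for $0<t<2\al-1$, and since $\frac12<\al<1$ this leaves the whole range $2\al-1\le t<1$ uncovered. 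Your proposed remedy, treating $S_{2\al}$ as an elliptic classical pseudodifferential operator of order $-(2\al-1)$ on $\pa\Om$ and invoking elliptic regularity to propagate invertibility, is not available under the paper's hypotheses: $\pa\Om$ is only $C^2$, so the kernel $\Ga_{2\al}(P-Q)$ restricted to the boundary does not fit the smooth symbolic calculus, and this step is in any case a substantial unproven ingredient rather than bookkeeping. The paper closes exactly this hole by elementary means: from Lemma \ref{lemm0916} and the Fredholm argument (Remark \ref{rem1021}, and the first part of the proof of Corollary \ref{coro1016}) it upgrades the Sobolev-scale isomorphism to $S_{2\al}\colon H^{2-2\al}_p(\pa\Om)\ri H^1_p(\pa\Om)$, dualizes to get $S_{2\al}\colon H^{-1}_p(\pa\Om)\ri H^{2\al-2}_p(\pa\Om)$, and then interpolates these \emph{higher} endpoints to obtain bijectivity of $S_{2\al}\colon B^s_p(\pa\Om)\ri B^{s+2\al-1}_p(\pa\Om)$ for all $-2\al+1\le s\le 2-2\al$, which covers every $t\in(0,1)$. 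Without an argument of this kind (or a proof of your pseudodifferential claim valid for $C^2$ boundaries), your construction of $\phi$ fails precisely when $t\ge 2\al-1$.

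A secondary point: you correctly observe that Theorem \ref{theo0905} is stated only for $s<0$, so for $t\ge 2\al-1$ (i.e. $s=t-2\al+1\ge0$) the interior estimate $u\in B^{t+\frac1p}_{loc,p}(\R)$ does not follow from it verbatim; your suggested fix via ``classical mapping properties of the Riesz potential plus trace/extension theory'' is left vague (embedding $B^s_p(\pa\Om)\subset B^{s'}_p(\pa\Om)$ with $s'<0$ only gives a weaker regularity, and the duality-with-trace argument of the paper genuinely uses $s<0$). To be fair, the paper's own proof of Theorem \ref{theo2-1} also cites Theorem \ref{theo0905} without comment on this range, so this is a shared loose end rather than a defect unique to your write-up; the essential missing piece in your proposal is the bijectivity on the upper part of the Besov scale described above.
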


The rest of this paper is organized as follows. In section
\ref{function spaces}, we introduce the several function spaces.
In section \ref{operator}, we introduce  the several property of the layer potential.
In section \ref{bijectivity-single}, we prove the Theorem \ref{theo1}.
In section \ref{proof-maintheo}, we   prove the Theorem \ref{theo0905}.
In section \ref{proof-maintheo-2}, we prove the Theorem \ref{theo2-1}.

\section{Function spaces}\label{function spaces}
\setcounter{equation}{0}

\subsection{Function spaces in $\R$.}
\label{inrn}
In this section, we introduce the several function spaces, Sobolev space and Besov space.
For $s \in {\mathbb R}$, we
consider a distribution  $G_s$ whose Fourier transform
in $\R$ is defined by
\begin{eqnarray*}
\widehat{ G_{s}} (\xi) = (1 + 4\pi^2 |\xi|^2 )^{-\frac{s}{2}}.
\end{eqnarray*}
For $s \in {\mathbb R}, \,\, 1\leq p \leq \infty$, we define the  Sobolev space $H_p^{s} (\R)$ by
\begin{eqnarray*}
H^s_p (\R) : &= \{ f \in {\mathcal S}'(\R) \, | \,
 \|f\|_{H^s_p (\R)} :  = \|  G_{-s} * f  \|_{L^p(\R)} < \infty\},
\end{eqnarray*}
where
$*$ is a convolution in $\R$ and ${\mathcal S}^{'}(\R)$
 is the dual space of the Schwartz space
${\mathcal S}(\R)$. In particular, when $s =k \in {\mathbb N} \cup \{ 0\}$ and $1 < p< \infty$,
\begin{align*}
 H^k_p(\R)=\{ f \, | \, D^\be f \in L^p(\R), \quad |\be| \leq k\},
\end{align*}
where $\be \in ({\mathbb N} \cup \{0\})^n$ and $|\be| = \be_1+ \be_2 + \cdots + \be_n$
for $\be= (\be_1, \be_2, \cdots, \be_n)$.

For  $k < s < k+1, \,\, k \in {\mathbb N}$,  we define the   Besov space
$B^{s}_{p} (\R)$ and  homogeneous Besov space $\dot B^{s}_{p} (\R)$ by
\begin{eqnarray*}
B^{s}_{p} (\R) = \{ f \in {\mathcal
S}^{'}(\R) \, | \, \|f\|_{B^{s}_{p}} <
\infty \, \}, \quad  \dot{B}^{s}_{p} (\R) = \{ f \in {\mathcal
S}^{'}(\R) \, | \, \|f\|_{\dot{B}^{s}_{p}} <
\infty \, \}
\end{eqnarray*}
with the norms
\begin{align*}
 \|f\|_{B^{s}_{p}}: & = \|f\|_{H^k_p} + \Big( \sum_{|\be| = k }\int \int_{\R \times \R}
  \frac{| D^\be f(x) - D^\be f(y)|^p}{|x-y|^{n + p(s-k)}}
  dydx \Big)^{\frac1p},\\
 \|f\|_{\dot{B}^{s}_{p}} :&   =  \Big( \sum_{|\be| = k }\int \int_{\R \times \R}
  \frac{| D^\be f(x) - D^\be f(y)|^p}{|x-y|^{n + p(s-k)}}
  dydx \Big)^{\frac1p}.
\end{align*}
If $s \in {\mathbb R}$ is negative, then we define $B^s_p$ and $\dot{B}^s_p$ as the dual spaces of $B^{-s}_{p'}$ and $\dot{B}^{-s}_{p'}$, respectively, where
$\frac1p + \frac1{p'} =1$.
The real interpolation method and complex interpolation method give
\begin{eqnarray*}
( H^{s_0}_p , H^{s_1}_p )_{\te, p} =
B^{s }_{p}, \qquad [H^{s_0}_p , H^{s_1}_p ]_{\te} =
H^{s }_{p}\end{eqnarray*}
for $ s = (1-\te) s_0 + \te s_1, \, s_0, \, s_1 \in {\mathbb R}, \quad
 0 < \te <1$ (see  theorem 6.4.5 in \cite{BL}).

\subsection{Function spaces in $\Om$.}
\label{inomega}
Let  $\Om$ be  bounded $C^2$-domain in $\R$.
Let   $R_{\Om} f$ be a restriction over $\Om$
 of the function $f$ defined in
 $\R$.  For $s \geq 0$, we define the function spaces
\begin{align*}
 H^{s}_p (\Om)
 : = \{ R_{\Om}f \, | \, f \in H^{s }_p(\Om) \}, \quad
 B^{s}_p (\R)
 : = \{ R_{\Om}f \, | \, f \in B^{s }_p(\R) \}
\end{align*}
with norms
\begin{align*}
\| f\|_{H^{s}_p (\Om) }:
= \inf \| F\|_{H^{s }_p(\R)  }, \quad
\| f\|_{B^{s}_p (\Om) }:
= \inf \| F\|_{B^{s }_p(\R)  },
\end{align*}
where infimums are taken in $ F \in H^{s}_p(\R)$ and $  F \in B^{s}_p(\R)$, respectively,
 such that $R_{\Om} F =f$.

Note that for non-negative integer $k$ and $1 < p < \infty$,
\begin{align}
H^{k}_p (\Om) =
\{ f \in L^p(\Om)\, | \, \sum_{ 0\leq l \leq k  }
\| D^{l} f\|_{L^p(\Om)} < \infty \}
\end{align}
and for $0 < \te < 1$,
\begin{align*}
 (H^{ k_1}_p (\Om), H^{ k_2}_p (\Om))_{\te, p}
= B^{s }_p (\Om), \qquad [H^{ k_1}_p (\Om), H^{ k_2}_p (\Om)]_\te
= H^{s }_p (\Om),
\end{align*}
where $s = \te k_1 + (1 -\te) k_2$  (see chapter 2 in \cite{JK}).
In particular, for  $k < s < k+1$, we have
\begin{align*}
\| f\|_{B^s_p(\Om)} \approx \| f\|_{H^k_p(\Om)} + ( \sum_{|\be | =k} \int_\Om \int_\Om \frac{|D^\be_x f(x) - D^\be_y f(y)|^p}{|x -y|^{n+ p(s -k)}} dxdy)^\frac1p.
\end{align*}
For negative $s \in {\mathbb R}$, we define $B^s_p(\Om)$ and $H^s_p(\Om)$ as the dual spaces of $B^{-s}_{p'}(\Om)$ and $H^{-s}_{p'}(\Om)$, respectively, where
$\frac1p + \frac1{p'} =1$.

For $s > 0$, we define the spaces $H^s_{p0}(\Om)$ and $B^s_{p 0}(\Om)$ as the closures of  $ C^\infty_c (\Om)$ in $H^s_{p}(\Om)$ and $B^s_{p}(\Om)$, respectively.

\subsection{Function spaces in $\pa \Om$.}
\label{inpaomega}
Let $\Om$ be a  bounded $C^2$-domain in $\R$.  Let us $\De(P,r) = B(P, r) \cap \pa \Om$ for $P \in \pa \Om$.
Then, there is $r_0 > 0$ such that for all $P \in \pa \Om$, there is  bijective  $C^2$-function $\Psi: B^{'}(0, r_0) \ri \De(P,r_0)$, where  $B^{'} (0, r_0)$ is open balls in $\Rn$ whose radius is $R$ and center is origin. Since $\Om$ is bounded domain, there are $P_1, \, P_2, \,\cdots, P_N$ such that $\pa \Om \subset \cup_{i} \De(P_i, r_0)$. Moreover, there are bijective $C^2$-functions $\Psi_i: B^{'}(0,r_0) \ri \De(P_i, r_0)$. Now, we say that $\phi$ is  in function space $H^s_p(\pa \Om), \, -2 \leq s \leq 2$ if  $ \phi \circ \Psi_i \in H^s_p(B^{'}(0, r_0))$ for all $1 \leq i \leq N$. And the norm is
\begin{align*}
\| \phi\|_{H^s_p(\pa \Om)} = \sum_{1 \leq i \leq N} \| \phi \circ \Psi_i\|_{H^s_p(B^{'}(0, r_0))}.
\end{align*}
Similarly, we define the function space $B^s_p(\pa \Om)$. Clearly, for $0 < s <2$, $H^{-s}_p(\pa \Om)$ and $B^{-s}_p(\pa \Om)$ are dual spaces of
$H^{s}_{p'}(\pa \Om)$ and $B^{-s}_{p'}(\pa \Om)$, $\frac1p + \frac1{p'} =1$, respectively.

For $0 < \te < 1$,
\begin{align}\label{0310}
(H^{ k_1}_p (\pa \Om), H^{ k_2}_p (\pa \Om))_{\te,p}
= B^{s }_p (\pa \Om), \qquad [H^{ k_1}_p (\pa \Om), H^{ k_2}_p (\pa \Om)]_\te
= H^{s }_p (\pa \Om),
\end{align}
where $s =(1- \te) k_1 + \te k_2$  (see chapter 2 in \cite{JK}).

\section{Boundary layer potential}\label{operator}
\setcounter{equation}{0}
Let $S_{2\al} $ be a boundary layer potential defined by \eqref{def-single-dual}. Because of $\pa \Om$ is $C^2$, we get the  following Proposition:
\begin{prop}\label{props-2}
Let $\Om$ be $C^2$ bounded domain.
For $ -2 \leq s \leq 3 -2\al $ and $1 < p < \infty$,
\begin{align}\label{dual}
S_{2\al} : H^s_p (\pa \Om) \ri H^{s + 2\al -1}_p(\pa \Om)
\end{align}
is bounded  operator. Moreover,  for $-1 \leq s \leq 0$,
\begin{align*}
S_2: H^s_p(\pa \Om) \ri H_p^{1 +s}(\pa \Om)
\end{align*}
is bijective.
\end{prop}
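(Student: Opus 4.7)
The two claims are of quite different nature: \eqref{dual} is a mapping property of a Riesz-type convolution restricted to a smooth hypersurface, while the invertibility of $S_2$ is the classical Laplace single-layer theorem. I would realise $S_{2\al}$ as a local pseudo-differential operator of order $-(2\al-1)$ on the $C^2$ manifold $\pa\Om$ for the boundedness, and combine the classical $L^p$ result with self-adjointness and interpolation for the inversion.

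For the boundedness, fix a smooth partition of unity $\{\eta_i\}$ subordinate to $\{\De(P_i,r_0)\}$ and split
\[
 S_{2\al}\phi(x) \;=\; \sum_{i,j}\eta_i(x)\int_{\pa\Om}\Ga_{2\al}(x-Q)\,\eta_j(Q)\phi(Q)\,dQ.
\]
Off-diagonal pairs ($\mathrm{supp}\,\eta_i\cap\mathrm{supp}\,\eta_j=\emptyset$) have $C^\infty$ kernels, hence are smoothing. For the diagonal pairs, pull back to $B'(0,r_0)\subset\Rn$ through the $C^2$ chart $\Psi_i$. Using $|\Psi_i(u)-\Psi_i(v)|^2 = |u-v|^2+O(|u-v|^4)$, the kernel factorises as
\[
\Ga_{2\al}\bigl(\Psi_i(u)-\Psi_i(v)\bigr) \;=\; c_{n,2\al}\,|u-v|^{-(n-2\al)}\bigl(1+E(u,v)\bigr),\qquad E(u,v)=O(|u-v|^2).
\]
Modulo smooth Jacobian factors, the principal part is the Euclidean Riesz convolution $I_{2\al-1}$ on $\Rn$, which is bounded $H^s_p(\Rn)\to H^{s+2\al-1}_p(\Rn)$ for every $s\in\mathbb R$ and $1<p<\infty$. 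The remainder kernel is less singular by a factor $|u-v|^2$, giving an operator of order $\le -(2\al+1)$ that is absorbed in the target space. The restriction $-2\le s\le 3-2\al$ is precisely the requirement that both $s$ and $s+2\al-1$ lie in $[-2,2]$, the range on which $H^\cdot_p(\pa\Om)$ is defined in Section \ref{inpaomega}.

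For the bijectivity of $S_2$, I would argue at three loci. At $s=0$, invoke the classical Fabes--Jodeit--Rivi\`ere / Verchota theorem: on any bounded $C^1$ (hence $C^2$) domain, $S_2:L^p(\pa\Om)\to H^1_p(\pa\Om)$ is an isomorphism for every $1<p<\infty$ (see \cite{FJR}, \cite{V}). At $s=-1$, use that $\Ga_2$ is symmetric, so $S_2$ is formally self-adjoint with respect to the $L^2(\pa\Om)$ pairing; applying the $s=0$ statement with the dual exponent $p'$ and the $H^{-1}_p$--$H^1_{p'}$ duality yields bijectivity of $S_2:H^{-1}_p(\pa\Om)\to L^p(\pa\Om)$. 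For $s\in(-1,0)$, apply complex interpolation \eqref{0310} between the two endpoints: since the endpoint inverses agree on the dense subspace $H^1_p\subset L^p$ by injectivity of $S_2$, they interpolate to a bounded inverse on every intermediate space.

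The main technical hurdle is the principal/remainder bookkeeping in the first step. Because $\Psi_i$ is only $C^2$, the pull-back $f\mapsto f\circ\Psi_i$ is bounded on $H^s_p(\Rn)$ only for $|s|\le 2$ — exactly the reason the range is cut off at $3-2\al$ — and one must verify that the remainder operator really gains strictly more than $2\al-1$ derivatives uniformly across the whole range, including the negative values of $s$ handled by duality.
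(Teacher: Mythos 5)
You should first note that the paper does not actually prove this proposition: its "proof" is the citation "(See \cite{V})". Your treatment of the second claim is consistent with that and is correct as a plan: the $s=0$ case is the classical result of \cite{FJR}, \cite{V}, the $s=-1$ case follows by the symmetry of $\Ga_2$ and duality, and the intermediate $-1<s<0$ cases follow from \eqref{0310} once one checks, as you do, that the two endpoint inverses are consistent on a dense subspace. So for that half you are essentially reconstructing what the citation stands for.

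The genuine gap is in your boundedness argument, at the factorization step. For a $C^2$ chart written as a graph $\Psi_i(u)=(u,\psi(u))$ with $\psi(0)=0$, $\na\psi(0)=0$, the mean value theorem gives only $|\psi(u)-\psi(v)|\le c\,\max(|u|,|v|)\,|u-v|$, so $|\Psi_i(u)-\Psi_i(v)|^2=|u-v|^2\bigl(1+g(u,v)\bigr)$ with $g$ bounded by $c\ep^2$ on a chart of radius $\ep$, but \emph{not} by $c|u-v|^2$: take $\psi(u)=|u|^2$ and $u,v$ near a fixed $u_0\neq 0$, where $\psi(u)-\psi(v)$ is linear in $u-v$. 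Hence your error term $E(u,v)$ is not $O(|u-v|^2)$, the remainder kernel has exactly the same singularity $|u-v|^{-(n-1)+(2\al-1)}$ as the principal part, and it is not an operator of order $\le -(2\al+1)$ that can be "absorbed". What the smallness of $g$ buys is a small operator \emph{norm} on small charts — this is precisely how the paper's Lemma \ref{lemm0916} uses it, via the estimates of \cite{CMM} — but smallness is useless for proving the boundedness \eqref{dual} unless you already know that this class of weakly singular surface kernels maps $H^s_p$ into $H^{s+2\al-1}_p$, which is the statement being proved. The repair is to drop the principal/remainder reduction and estimate the full pulled-back kernel $K(u,v)=\Ga_{2\al}(\Psi_i(u)-\Psi_i(v))$ directly as a weakly singular kernel with $|D_u^\be D_v^\ga K(u,v)|\le c|u-v|^{-(n-1)+(2\al-1)-|\be|-|\ga|}$ for $|\be|+|\ga|\le 2$, and prove (or cite, as the paper in effect does) the Sobolev mapping theorem for such kernels, with separate attention to the endpoint $s=3-2\al$, where the $H^2_p$ target consumes all of the $C^2$ regularity of $\pa\Om$, and to $s=-2$, which should be obtained by duality. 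A smaller inaccuracy: $I_{2\al-1}$ is not bounded from $H^s_p(\Rn)$ to $H^{s+2\al-1}_p(\Rn)$ globally (low frequencies obstruct this); your diagonal pieces are really of the form $\eta_i I_{2\al-1}\eta_j$ with compactly supported cutoffs on both sides, and the claim should be stated and used in that localized form.
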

(See    \cite{V}).\\
Let $\psi, \,\, \phi \in C^2(\pa \Om)$ and $S^*_{2\al}: H^{-s - 2\al +1}_{p'}(\pa \Om) \ri   H^{-s}_{p'} (\pa \Om)$ is a dual operator of \eqref{dual}, where $\frac1p + \frac1{p'} =1$. Using \eqref{boundary integral operator}, we have
\begin{align} \label{dual2}
\notag << S^*_{2\al} \psi, \phi>> & = < \psi, S_{2\al}\phi >
  = \int_{\pa \Om} \psi(P) S_{2\al} \phi(P) dP\\
  &= \int_{\pa \Om}  \phi(P) S_{2\al} \psi(P)  dP
%&= \int_{\pa \Om} \psi(P) \int_{\pa \Om} \Ga_{2\al} (P -Q) \phi(Q) dQ dP\\
%&= \int_{\pa \Om} \psi(Q) \int_{\pa \Om} \Ga_{2\al} (P -Q) \phi(P) dP dQ\\
 =<\phi, S_{2\al} \psi>,
\end{align}
where
$<\cdot, \cdot>$ is the duality paring between
$H^{s + 2\al -1}_p (\pa \Om) $ and $H^{-s -2\al +1}_{p'}(\pa \Om), \,\, \frac1p + \frac1{p'} =1$,  and  $<<\cdot, \cdot>>$ is the duality paring between
$H^{s }_p (\pa \Om) $ and $H^{-s }_{p'}(\pa \Om), \,\, \frac1p + \frac1{p'} =1$.
Since $C^2(\pa \Om)$ is dense subset of $ H^s_p (\pa \Om)$, \eqref{dual2} implies that $S^*_{2\al}: H^{-s - 2\al +1}_{p'}(\pa \Om) \ri   H^{-s}_{p'} (\pa \Om)$ is a same operator with  $S_{2\al}: H^{-s - 2\al +1}_{p'}(\pa \Om) \ri   H^{-s}_{p'} (\pa \Om)$ for $s < 0$.

\section{Proof of Theorem \ref{theo1}}\label{bijectivity-single}
\setcounter{equation}{0}

To prove the Theorem \ref{theo1}, we need the following Lemma.
\begin{lemm}\label{lemm0916}
Let $\frac12 < \al <1$.
Given  sufficiently small $\ep > 0$, there are bounded linear operators $T^1 :L^p(\pa \Om) \ri  H^1_p(\pa \Om)$  with  $\|T^1\|_{L^p(\pa \Om) \ri H^1_p(\pa \Om)} < c\ep$
and
$T^2:H^{-1}_p(\pa \Om) \ri  H^1_p(\pa \Om)$ such that
\begin{align}\label{lemma3}
S_{2\al} S_{3 -2\al} = S_2 + T^1 + T^2.% \, (\mbox{or  }   S_{3 -2\al}S_{2\al} = S_2 + T^1 + T^2 ).
\end{align}
\end{lemm}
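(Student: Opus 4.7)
The plan is to compute the kernel of the composition $S_{2\al}S_{3-2\al}$ explicitly and compare it with $\Ga_2(P-R)$, the kernel of $S_2$, by means of the Riesz composition formula. By Fubini,
\begin{align*}
(S_{2\al}S_{3-2\al}\phi)(P) = \int_{\pa\Om}K(P,R)\phi(R)\,dR,\qquad K(P,R) = \int_{\pa\Om}\Ga_{2\al}(P-Q)\Ga_{3-2\al}(Q-R)\,dQ.
\end{align*}
When $\pa\Om$ is flat, $\Ga_{2\al}(P-Q)=c(n,2\al)|P-Q|^{-(n-2\al)}$ is the $\Rn$-Riesz kernel of order $2\al-1$ and $\Ga_{3-2\al}(Q-R)$ that of order $2-2\al$; their indices sum to $1$, so the $\Rn$-Riesz composition formula produces a kernel proportional to $|P-R|^{-(n-2)}$, which after matching normalizations is $\Ga_2(P-R)$. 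Thus on a flat boundary $S_{2\al}S_{3-2\al}=S_2$ identically, and on the curved $\pa\Om$ the lemma should follow by tracking the curvature corrections.

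First, I would fix $\ep>0$ small, cover $\pa\Om$ by the $C^2$-charts $\{\Psi_i:B^{'}(0,r_0)\to\De(P_i,r_0)\}$ of Subsection \ref{inpaomega} with subordinate partition of unity $\{\chi_i\}$, and introduce a smooth cutoff $\eta_\ep(t)$ equal to $1$ for $t\le \ep$ and $0$ for $t\ge 2\ep$. Split $K = \eta_\ep(|P-R|)K + (1-\eta_\ep(|P-R|))K$, and further localize the $Q$-variable so that the near-diagonal part confines $P,Q,R$ to a single chart. Pulling back via $\Psi_i$ with $x=\Psi_i^{-1}(P)$, $y=\Psi_i^{-1}(R)$, $z=\Psi_i^{-1}(Q)$, the surface measure is $dQ=J(z)\,dz$ with $J\in C^1$, and $C^2$-regularity gives $|\Psi_i(x)-\Psi_i(z)| = |x-z|\bigl(1+O(|x-z|)\bigr)$ once one chooses coordinates so that $D\Psi_i(x)^T D\Psi_i(x)=I$. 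Substituting the leading part and invoking
\begin{align*}
\int_{\Rn}\frac{dz}{|x-z|^{n-2\al}\,|z-y|^{n-3+2\al}} = c_{n,\al}\,|x-y|^{-(n-2)}
\end{align*}
extracts exactly $\eta_\ep(|P-R|)\Ga_2(P-R)$ as the principal piece.

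The residual in the near-diagonal part comes from (i) $J(z)-J(x)=O(|x-z|)$, (ii) the metric correction $|P-Q|^{-(n-2\al)}-|x-z|^{-(n-2\al)}=O(|x-z|^{-(n-2\al)+1})$ and analogously in $|Q-R|$, and (iii) the truncation. Each extra power of $|x-z|$ or $|z-y|$, integrated against the remaining weakly singular factor over a ball of radius $\lesssim\ep$, gives an operator whose $L^p\to H^1_p$ norm carries a factor $\ep$; summing over charts assembles $T^1$ with $\|T^1\|_{L^p\to H^1_p}\le c\ep$. The far-field part $(1-\eta_\ep(|P-R|))K$, together with the tail $(1-\eta_\ep(|P-R|))\Ga_2(P-R)$ subtracted from $S_2$, is smooth in $P$ and $R$: since $\frac12<\al<1$ keeps each factor integrable on the $(n-1)$-dimensional boundary, differentiation under the integral is justified and yields an operator gaining two derivatives, i.e.\ $T^2:H^{-1}_p(\pa\Om)\to H^1_p(\pa\Om)$. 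The hard part is the bookkeeping: one must check that the $\Rn$-Riesz composition constant equals the normalization of $\Ga_2$ exactly (any discrepancy being absorbed into $T^2$ as a bounded smoothing operator) and that each curvature error is truly small in operator norm, not merely bounded—this is where the smallness of $\ep$ enters, via standard $L^p$-estimates for singular integral operators with a Calder\'on--Zygmund size condition.
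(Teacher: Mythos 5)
Your overall strategy is the same as the paper's: localize at scale $\ep$, flatten the boundary in the $C^2$ charts (where $|\na\Psi|\le c\ep$, $|\Psi|\le c\ep^2$), extract $\Ga_2$ from the flat Riesz composition formula $\int_{\Rn}\Ga_{2\al}(x'-z',0)\Ga_{3-2\al}(y'-z',0)\,dz'=\Ga_2(x'-y',0)$, bound the flattening errors by $c\ep$ in $L^p\ri H^1_p$ via Coifman--McIntosh--Meyer type estimates, and dump the nonsingular remainders into $T^2$. That part of your argument matches the paper (which cites \cite{CMM} for exactly this smallness).

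However, one step as you wrote it would fail: the treatment of the off-diagonal piece. You claim that $(1-\eta_\ep(|P-R|))K(P,R)$ is smooth because ``differentiation under the integral is justified'' since each factor is integrable on $\pa\Om$. The factors are integrable, but their first derivatives are not: $|\na_P\Ga_{2\al}(P-Q)|\sim|P-Q|^{-(n+1-2\al)}$ and $n+1-2\al>n-1$ precisely because $\al<1$, so the differentiated integrand is non-integrable over the $(n-1)$-dimensional surface near $Q=P$ (and similarly in $R$ near $Q=R$), no matter how far $P$ is from $R$. The composed kernel $K(P,R)$ is therefore not $C^\infty$ off the diagonal; it has only limited regularity (roughly one derivative plus H\"older in each variable), and establishing even the needed $H^{-1}_p\ri H^1_p$ bound for this piece requires an extra argument, e.g.\ a further splitting of the $Q$-integral into the regions near $P$, near $R$, and away from both. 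The paper avoids this issue entirely by never forming the composed kernel for the far parts: it keeps the operator factorization, writing those pieces as $\eta_i S_{2\al}(1-\ka_i)S_{3-2\al}$ and $\eta_i S_{2\al}\ka_i S_{3-2\al}(1-\la_i)$, i.e.\ a smooth-kernel operator composed with the bounded map $S_{3-2\al}:H^{-1}_p(\pa\Om)\ri H^{2-2\al}_p(\pa\Om)$, which yields $T^2:H^{-1}_p\ri H^1_p$ immediately. A second, smaller gap: after localizing, your $z$-integration runs only over the chart, not over all of $\Rn$, so the composition formula does not apply verbatim; the missing tail $\int(1-\ka_i(z',0))\Ga_{2\al}(x'-z',0)\Ga_{3-2\al}(y'-z',0)\,dz'$ must be exhibited as a smooth kernel and moved into $T^2$ (this is the term $k_i(x',y')$ in the paper), rather than being waved into ``bookkeeping.'' With the far-field piece handled by the operator-factorization (or a genuine three-region kernel analysis) and the tail term made explicit, your proof goes through along the paper's lines.
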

\begin{rem}\label{rem0821}
\begin{itemize}

\item[(1)]
Since $S_{2}: L^p(\pa \Om) \ri H^1_p(\pa \Om)$ is bijective, for sufficiently small $\ep>0$, $S_{2}+ T^1: L^p(\pa \Om) \ri H^1_p(\pa \Om)$ is
bijective.
\item[(2)]
Since $S_2$, $S_{2\al} S_{3 -2\al}$  and $T^2$ are bounded operators from $ H^{-1}_p(\pa \Om)$ to $   L^p(\pa \Om)$, $T^1: H^{-1}_p(\pa \Om) \ri L^p(\pa \Om)$ is also bounded operator. Then, by the complex interpolation property \eqref{0310}, we get that for $-1 < s <0 $,
\begin{align}\label{1015}
\| T^1\|_{H^{s}_p(\pa \Om) \ri H^{1+s}_p(\pa \Om) } \leq c \ep^{1 +s}.
\end{align}
\item[(3)]
By the argument of  (1), (2) and by Proposition \ref{props-2}, we get $S_2 + T^1: H^s_p (\pa \Om) \ri H^{1 +s}_p(\pa \Om), \,\, -1 < s < 0$ is bijective.

\end{itemize}
\end{rem}
{\bf Proof of Lemma \ref{lemm0916}.}
Let $ 0 < 9\ep < r_0$, where $r_0>0$ is defined in a section  \ref{inpaomega}.
Let $P_1, P_2, \cdots, P_m \in \pa \Om$ such that
$|P_i -P_j| >  \ep$ and $\pa \Om \subset \cup_{1 \leq i \leq m} B(P_i, \frac32\ep)$.
Let $\{\eta_i\}_{1 \leq i\leq m}$ be a partition of unity of $\{ B(P_i, 2 \ep)\}_{1 \leq i \leq m}$ such that $supp \, \eta_i \subset B(P_i, 2 \ep), \,\, \eta_i \equiv 1 $ in $B(P_i, \frac32\ep)$. Let $\ka_i = \sum_{|P_i -P_j| \leq 5\ep} \eta_j$ and $\la_i = \sum_{|P_i -P_j| \leq 7\ep} \eta_j $ such that $supp \, \ka_i \subset  B(P_i, 7\ep)$ and $supp \, \la_i \subset  B(P_i, 9\ep)$.  Then,  for $\phi \in L^p(\pa \Om)$, we have
\begin{align*}
S_{2\al} S_{3 -2\al} \phi %&  = \sum_{ij} \eta_i S_{2\al} \eta_jS_{3 -2\al}  \phi \\
%  & =  \sum_{i} \sum_{|P_i -P_j| \leq  5\ep}   \eta_i S_{2\al} \eta_j S_{3 -2\al}  \phi
%   + \sum_{i} \sum_{|P_i -P_j| > 5\ep}   \eta_i S_{2\al} \eta_jS_{3 -2\al}  \phi\\
 & =  \sum_{i}     \eta_i S_{2\al} \ka_i S_{3 -2\al}  \phi
   + \sum_{i}    \eta_i S_{2\al}  ( 1-\ka_i )S_{3 -2\al}  \phi\\
  & : = I_1\phi + I_2\phi.
  \end{align*}
Note that for $P \in \De(P_i, 2\ep)$,  $ \ka_i (\cdot) \Ga_{2\al}(P - \cdot)  $ has no singularity   in $\pa \Om \setminus \De (P_i,3\ep) $ and so  $ I_2 \phi$ is a smooth function.

For $I_1 \phi$,   we have
\begin{align*}
I_1 \phi (P) & = \sum_{i}     \eta_i S_{2\al} \ka_i S_{3 -2\al} \la_i  \phi (P)
   + \sum_{i}    \eta_i S_{2\al}   \ka_i S_{3 -2\al} (1 -\la_i) \phi (P)\\
    &=    \sum_i   \eta_i (P) \int_{\pa \Om } \Ga_{2\al}(P - Z)  \ka_i(Z)    \int_{\De(P_i,9\ep) }\la_i(Q)\Ga_{3 -2\al}(Z-Q) \phi(Q)   dQ dZ\\
    & \qquad +     \sum_i    \eta_i(P) \int_{\pa \Om} \Ga_{2\al}(P - Z)  \ka_i(Z)   \int_{  \pa \Om \setminus \De (P_i,9\ep)   }(1-\la_i(Q)) \Ga_{3 -2\al}(Z-Q) \phi(Q)   dQ dZ\\
    &: =  \sum_i  I^i_{11} \phi(P) +  \sum_i I^i_{12} \phi(P)\\
    &: =  I_{11} \phi(P) + I_{12} \phi(P).
\end{align*}
For fixed $Z \in \De (P_i,7\ep)$, $ \Ga_{3 -2\al}(Z-\cdot)   $ has no singularity in $\pa \Om \setminus \De (P_i,9\ep)$  and so  $ \ka_i(Z) \int_{\pa \Om \setminus \De (P_i,9\ep)} (1- \la_i(Q) ) \Ga_{3 -2\al}(Z-Q) \phi(Q)   dQ$ is a smooth function in $\pa \Om$. Hence, by  Proposition \ref{props-2}, $  I_{12} \phi$ is a  smooth function.

Similarly, we decompose $S_2 \phi$ as
\begin{align*}
S_2 \phi(P) = J_{11}\phi(P) + J_{12} \phi(P) + J_2\phi(P),
\end{align*}
where $  J_{12} \phi, \,\,J_2\phi$ are smooth functions and
\begin{align*}
 J_{11} \phi(P) =  \sum_i    J^i_{11} \phi(P)
\end{align*}
with $J^i_{11} \phi(P) : =  \eta_i(P)    \int_{   \De (P_i,9\ep)   } \Ga_2(P-Q) \la_i(Q) \phi(Q)   dQ  $.

For $I_{11} \phi, \,\, J_{11} \phi$, we fix $i$. Using the translation and  rotation, we assume that $P_i =0$  and there is $\Psi :  B'(0, 9\ep) \ri{\mathbb R}$  with
\begin{align}\label{Phi}
|\Psi_i(x') | < c|x'|^2< c\ep^2, \,\, |\na \Psi_i(x')| < c|x'|< c\ep, \quad \mbox{for } \,\, x' \in B'(0,9\ep)
\end{align}
such that for $Q \in  \De(P_i,9\ep): = \De^i_{9\ep}$, $Q $ is represented by $Q = (y', \Psi(y'))$ for some $y' \in B'(0, 9\ep) : = B'(9\ep)$.
Let $P = (x', \Psi(x')), \,\, x' \in B'(0, 2\ep)$.
Then,  we have
\begin{align*}
& I^i_{11} \phi(P) %= \eta_i(P) \int_{\pa \Om} \Ga_{2\al}(P - Z)  \ka_i(Z)   \int_{\De^i_{9\ep}} \la_i(Q) \Ga_{3 -2\al}(Z-Q) \phi(Q)   dQ dZ\\
   = \eta_i(P) \int_{\De^i_{9\ep}}  \la_i(Q) \phi(Q) \int_{\pa \Om} \ka_i(Z) \Ga_{2\al}(P - Z) \Ga_{3 -2\al}(Q -Z) dZ dQ\\
& =  \eta_i(P) \int_{B'(9\ep)} \la_i(y', \Psi(y'))  \phi(y', \Psi(y') )  \sqrt{1 + |\na \Psi(y')|^2}   \times \\
&   \qquad     \int_{B'(7\ep)}  \ka_i(z', \Psi(z'))\Ga_{2\al}(x' -z', \Psi(x')- \Psi(z')) \Ga_{3 -2\al}(y'- z', \Psi(y')-\Psi(z')) \sqrt{1 + |\na \Psi(z')|^2}    dz'   dy'\\
%& -  \eta_i(P) \int_{B'(9\ep)} \la_i(y', 0)  \phi(y', \Psi(y') )      \int_{B'(9\ep)}  \ka_i(z', 0 )\Ga_{2\al}(x' -z', 0) \Ga_{3 -2\al}(y'- z', 0)      dz'   dy'\\
%& +  \eta_i(P) \int_{B'(9\ep)} \la_i(y', 0)  \phi(y', \Psi(y') )     \int_{B'(9\ep)}  \ka_i(z', 0 )\Ga_{2\al}(x' -z', 0) \Ga_{3 -2\al}(y'- z', 0)      dz'   dy'.
\end{align*}
and
\begin{align*}
J^i_{11} \phi(P) : =  \eta_i(P)   \int_{B'(9\ep)}   \Ga_2(x'-y', \Psi(x')-\Psi(y'))  \la_i(y', \Psi(y'))  \phi(y', \Psi(y') )   \sqrt{1 + |\na \Psi(y')|^2}  dy'.
\end{align*}
Let
\begin{align*}
I^i_{111}\phi(P):& = \eta_i(P)  \int_{B'(9\ep)}  \la_i(y', 0)   \phi(y', \Psi(y'))\int_{B'(7\ep)} \ka_i(z',0)
 \Ga_{2\al}(x' -z', 0) \Ga_{3 -2\al}(y'- z', 0)   dz'   dy',\\
J^i_{111} \phi (P):& = \eta_i(P)  \int_{B'(9\ep)} \la_i(y', 0)
 \Ga_{2}(x' -y', 0)  \phi(y', \Phi(y'))  dy'.
%T_{114i}\phi(P):& = \eta_i(P) \int_{B'(9\ep)} \la_i(y', \Psi(y'))
% \Ga_{2}(x' -y', \Psi(x') -\Psi(y'))  \phi(y',\Psi(y')) \sqrt{1 + |\na \Psi(y')|^2}   dy',\\
%R_{113i}\phi(P):& =\eta_i(P) \int_{\pa \Om \setminus \De^i_{9\ep}} (1 -\la_i(Q)) \Ga_2(P -Q) \phi(Q) dQ\\
%R_{114i} \phi(P): & = \eta_i(P)  \int_{B'(9\ep)}\la_i(y',0) k_i(x', y') \phi(y', \Psi(y'))  dz'.
 \end{align*}
Note that by \eqref{Phi}, we have
\begin{align*}
\| I^i_{11} - I^i_{111}\|_{L^p (\De^i_{9\ep}) \ri H^1_p(\De^i_{2\ep})}, \qquad  \| J^i_{11} - J^i_{111}\|_{L^p (\De^i_{9\ep}) \ri H^1_p(\De^i_{2\ep})} \leq c \ep.
\end{align*}
(See \cite{CMM}).

It is well-known
\begin{align*}
 \int_{\Rn} \Ga_{2\al} (x'-z', 0) \Ga_{3 -2\al} (y' -z', 0) dz' =  \Ga_2(x' - y',0).
 \end{align*}
(See Section 5.1 in \cite{St}.)
 Hence, we have
 \begin{align*}
  \Ga_2(x' - y',0)& = \int_{\Rn} \ka_i(z',0)
 \Ga_{2\al}(x' -z', 0) \Ga_{3 -2\al}(y'- z', 0)   dz'\\
 & \qquad  +  \int_{\Rn}   (1 - \ka_i(z',0) )
 \Ga_{2\al}(x' -z', 0) \Ga_{3 -2\al}(y'- z', 0)   dz'\\
 & = \int_{\Rn} \ka_i(z',0)
 \Ga_{2\al}(x' -z', 0) \Ga_{3 -2\al}(y'- z', 0)   dz' +  k_i(x', y').
 \end{align*}
 Hence, we have that
 \begin{align*}
&  I^i_{111}\phi(P) - J^i(111) \phi(P)  =    \eta_i(P) \int_{B'(9\ep)} \la_i(y', 0)  \phi(y', \Psi(y') )     k _i(x', y')  dy'
 \end{align*}
is a smooth function. Let $T^1 =  \sum_i(I^i_{11} - I^i_{111}) + \sum_i(J^i_{11} - J^i_{111}) $ and $T_2 = I_2 + J_2 + I_{12} + J_{12} + \sum_i( I^i_{111}  - J^i(111) ) $.
Then,  $ S_{2\al} S_{3 -2\al} = S_2 + T^1 + T^2$ such that $T^2$ is a smooth function and
\begin{align*}
 \| T^1\phi \|_{ H^1_p(\pa \Om) } & \leq c \sum_i \big(\| (I^i_{11} -I^i_{111}) \phi\|_{H^1_p (\pa \Om)} +   \|(J^i_{11} -J^i_{111}) \phi\|_{H^1_p (\pa \Om)} \big)\\
 & \leq c \ep \sum_i \| \phi\|_{L^p(\De^i_{9\ep})} \leq c \ep \| \phi\|_{L^p(\pa \Om)}.
\end{align*}
Hence, we complete the proof of Lemma \ref{lemm0916}. $\Box$

{\bf Proof   Theorem \ref{theo1}.}

{\Large (1).  In the case of $p \geq  p_0: = \frac{2(n-1)}{n-2 + 2\al}$ ($p_0 < 2$).}
To show the injectivity, suppose that $S_{2\al} \phi =0$ for $\phi \in L^p (\pa \Om)  $. By the H$\ddot{o}$lder inequality  and Sobolev imbedding,
  we have $L^p (\pa \Om) \subset  L^{p_0} (\pa \Om) \subset H^{-\al +\frac12}_2(\pa \Om)$. Hence, by  the bijectivity of   $S_{2\al}:  H^{-\al +\frac12}_2(\pa \Om) \ri  H^{\al -\frac12}_2(\pa \Om)$ (see \cite{C}), we have $\phi =0$. Hence, $S_{2\al}:  L^p(\pa \Om) \ri  H^{2\al -1}_p(\pa \Om)$ is bijective for $ p \geq p_0 $.

To show that $S_{2\al}:  L^p(\pa \Om) \ri  H^{2\al -1}_p(\pa \Om)$ is surjective,  let $f \in   H^{2\al -1 }_p (\pa \Om)$.
By the Sobolev imbedding and the H$\ddot{o}$lder inequality, we have $H_p^{2\al -1 } (\pa \Om) \subset H_{p_0}^{2\al -1 } (\pa \Om) \subset H_2^{\al - \frac12}(\pa \Om)$. By the bijectivity of  $S_{2\al} : H^{-\al +\frac12}_2(\pa \Om) \ri H^{\al -\frac12 }_2(\pa \Om)$,
there is $\phi \in H^{-\al +\frac12}_2(\pa \Om)$ such that $S_{2\al} \phi = f$.
Note that by the Lemma \ref{lemm0916}, we get that $S_{3 -2\al} S_{2\al} = S_2 + T^1 + T^2$, where $\| T^1\|_{L^p(\pa \Om) \ri H^1_p(\pa \Om)} \leq \ep $ and $T^2:H^{-1}_2(\pa \Om) \ri H^1_2(\pa \Om)$ is bounded. Then, by   the Lemma \ref{lemm0916}, we obtain that
$(S_2 + T^1)\phi   = S_{3-2\al} S_{2\al} \phi  - T^2\phi \in H^1_p(\pa \Om)$.  Taking $\ep>0$ sufficiently small such that
  $S_2 + T^1: L^p(\pa \Om) \ri H^1_p(\pa \Om)$ is bijective (see (1) of the Remark \ref{rem0821}), we obtain that $\phi \in L^p(\pa \Om)$. This implies that $S_{2\al} : L^p(\pa \Om) \ri H^{2\al -1 }_p (\pa \Om)$ is surjective. Hence, we complete the proof of  the bijectivity of $S_{2\al}: L^p(\pa \Om) \ri H^{2\al -1}_p(\pa \Om)$ for  $p \geq \frac{2(n-1)}{n-2 + 2\al} $.
$\Box$
\begin{rem}\label{rem1021}
\begin{itemize}
\item[(1)]
 Note that the dual operator $S_{2\al}^* : H^{-2\al +1}_{p'} (\pa \Om) \ri L^{p'}(\pa \Om)$ of $ S_{2\al} : L^p(\pa \Om) \ri H^{2\al -1 }_p (\pa \Om)$ are same with the operator $S_{2\al} : H^{-2\al +1}_{p'} (\pa \Om) \ri L^{p'}(\pa \Om)$, where $\frac1p + \frac1{p'} =1$ by the section \ref{operator}.  Hence, by the property of the  dual operator, we get that $S_{2\al}: H^{-2\al +1}_p (\pa \Om) \ri L^p(\pa \Om)$ is bijective for $ 1 <p \leq \frac{2(n-1)}{n-2\al}= p_0^{'}$.
\item[(2)]
In the Lemma \ref{lemm0916}, $ S_{3 -2\al}S_{2\al}$ is sum of a bijective operator $S_2 + T^1$ and  a compact operator $T^2$ and so $S_{3-2\al} S_{2\al}$ is the Fredholm operator with index zero. Since  $S_{2\al}: L^p(\pa \Om) \ri H^{2\al -1}_p(\pa \Om)$, $S_{3 -2\al}: H^{2\al -1}_p(\pa \Om) \ri H^1_p(\pa \Om)$ are injective,  $ S_{3 -2\al}S_{2\al}$ is injective and so by the Fredholm operator theorem, $ S_{3 -2\al}S_{2\al}  : L^p(\pa \Om) \ri H^1_p(\pa \Om)$ is bijective. This implies that $S_{3 - 2\al}: H^{2\al -1}_p (\pa \Om) \ri H^1_p (\pa \Om)$ is bijective for $p \geq \frac{2(n-1)}{(n -2 + 2\al)}$.

\end{itemize}
\end{rem}

% From the Lemma  \ref{lemm0916},
%given  sufficiently small $\ep > 0$, there are bounded linear operators $R^1 :L^p(\pa \Om) \ri  H^1_p(\pa \Om)$  with  $\|R^1\|_{L^p(\pa \Om) \ri H^1_p(\pa \Om)} < \ep$
%and
%$R^2:H^{-1}_p(\pa \Om) \ri  H^1_p(\pa \Om)$ such that
%\begin{align}\label{lemma3}
%S_{2\al} S_{3 -2\al} = S_2 + R^1 + R^2.% \, (\mbox{or  }   S_{3 -2\al}S_{2\al} = S_2 + T^1 + T^2 ).
%\end{align}

%
%Using the property of the  dual operator, we have that $(T^1)^*:  H^{-s}_p(\pa \Om) \ri  H_p^{-s +1}(\pa \Om)$ and $(T^2)^* :  H^{-1}_p(\pa \Om) \ri
% H^1_p(\pa \Om), \,\, 0 < s < 1$ are bounded operators  such that
% \begin{align}\label{1015-1}
% \| (T^1)^*\|_{ H^{-s}_p (\pa \Om) \ri H^{-s +1}_p (\pa \Om) } < c\ep^{s}.
% \end{align}
%
%
%
% Since $S_{2\al}^* = S_{2\al}$ by section \ref{operator}, from lemma \ref{lemm0916}, we have
%\begin{align}\label{131009}
%S_{3 -2\al} S_{2\al} = (S_{2\al}S_{3 -2\al})^* = S_2 + (T^1)^* + (T^2)^*.
%\end{align}

{\Large (2).  In the case of $1< p< 2$.}
Now, we will show that $S_{2\al}:  H^{-2\al +1}_q(\pa \Om) \ri  L^q(\pa \Om)$ is surjective for $q = \frac{p}{p-1} > 2$. Let $f \in L^q(\pa \Om)$.
By the H$\ddot{o}$lder inequality, $L^q (\pa \Om) \subset L^2 (\pa \Om)$ and by the bijectivity of  $S_{2\al} : H^{-2\al +1 }_2(\pa \Om) \ri L^2(\pa \Om)$ (see (1) of Remark \ref{rem1021}),
there is $\phi \in H^{-2\al +1}_2(\pa \Om)$ such that $S_{2\al} \phi = f$.  Then,
\begin{align*}
S_{3-2\al} S_{2\al} \phi = S_{3 -2\al} f \in H_q^{2-2\al }(\pa \Om).
\end{align*}
By  (3) of  Remark \ref{rem0821}, we obtain that
$(S_2 + T^1 )\phi \in H^{2-2\al}_q(\pa \Om)$.
Since $S_2 + T^1: H^{-2\al +1}_q(\pa \Om) \ri H^{2-2\al}_q(\pa \Om)$ is bijective (see (3) of  Remark \ref{rem0821}), we obtain that $\phi \in H^{-2\al +1}_q(\pa \Om)$. This implies that $S_{2\al} : H^{-2\al +1}_q(\pa \Om) \ri L^q (\pa \Om)$ is surjective.

By the property of the dual operator, we have that $S^*_{2\al}: L^p (\pa \Om) \ri H^{2\al -1}_p(\pa \Om), \, 1 < p < 2$ are injective.
Since $S_{2\al}^* = S_{2\al}$, $ S_{2\al}: L^p (\pa \Om) \ri H^{2\al -1}_p(\pa \Om) $ is injective and so  $S_{2\al}:H^{2\al -1}_p (\pa \Om) \ri H^{1}_p(\pa \Om)$ is injective.
Hence, we get $S_{2\al} S_{3 -2\al}: L^p(\pa \Om) \ri H^1_p(\pa \Om), \, 1 < p<2$ are injective.

Note that in (3) of   Remark \ref{rem0821},  $S_{2\al} S_{3-2\al}$ is the sum of a bijective operator and a compact operator. Hence, by the Fredholm theorem, $S_{2\al} S_{3 -2\al}: L^p(\pa \Om) \ri H^1_p(\pa \Om)$ is bijective.

To show that $S_{2\al}: L^p(\pa \Om) \ri H^{2\al -1}_p(\pa \Om)$ is surjective for $1 < p < 2$, let us $f \in H^{2\al -1}_p(\pa \Om)$. Then, we have
$S_{3- 2\al }f \in H^1_p(\pa \Om)$. Since, $S_{3 -2\al}S_{2\al}: L^p(\pa \Om) \ri H^1_p(\pa \Om)$ is bijective, there is $\phi \in L^p(\pa \Om)$ such that
$S_{3 -2\al}S_{2\al} \phi = S_{3-2\al} f$. Since $S_{3-2\al}$ is injective, we get $S_{2\al} \phi =f$ and so $S_{2\al} : L^p(\pa \Om) \ri H^{2\al -1}_p(\pa \Om)$ is bijective.
$\Box$

\begin{coro}\label{coro1016}
Let $ \frac12 < \al < 1$ and $1 < p < \infty$.
For $-2\al +1 \leq s \leq 2 -2\al$,
\begin{align*}
S_{2\al} : H^s_p (\pa \Om )\ri H^{s + 2\al -1}_p (\pa \Om),\\
S_{2\al} : B^s_p (\pa \Om )\ri B^{s + 2\al -1}_p (\pa \Om)
\end{align*}
are bijective.

\end{coro}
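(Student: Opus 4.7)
The plan is to establish a third Sobolev bijectivity endpoint $s=2-2\al$ beyond the two already available ($s=-2\al+1$ from Remark \ref{rem1021}(1) and $s=0$ from Theorem \ref{theo1}), then use interpolation to cover the entire range. Applying Theorem \ref{theo1} with $\al$ replaced by $\al'=\frac{3}{2}-\al\in(\frac12,1)$ gives $S_{3-2\al}:L^p(\pa\Om)\ri H^{2-2\al}_p(\pa\Om)$ bijective for every $1<p<\infty$. Lemma \ref{lemm0916} then provides the decomposition $S_{2\al}S_{3-2\al}=S_2+T^1+T^2$; since $S_2+T^1:L^p\ri H^1_p$ is bijective for small $\ep$ (Remark \ref{rem0821}(1)) and $T^2$ is compact from $L^p$ to $H^1_p$ (its kernel is smooth, so it factors through any $H^s_p$ with $s>1$), the composition $S_{2\al}S_{3-2\al}:L^p\ri H^1_p$ is Fredholm of index zero. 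Injectivity is immediate: if $S_{2\al}S_{3-2\al}h=0$, then $S_{3-2\al}h\in H^{2-2\al}_p\subset H^{-2\al+1}_p$ is annihilated by $S_{2\al}$, so Remark \ref{rem1021}(1) forces $S_{3-2\al}h=0$ and bijectivity of $S_{3-2\al}$ on $L^p$ yields $h=0$. Thus $S_{2\al}S_{3-2\al}:L^p\ri H^1_p$ is bijective, and factoring out the bijection $S_{3-2\al}:L^p\ri H^{2-2\al}_p$ delivers $S_{2\al}:H^{2-2\al}_p\ri H^1_p$ bijective.

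With bijectivity at $s\in\{-2\al+1,0,2-2\al\}$ in hand, I would invoke the complex interpolation identity $[H^{s_0}_p(\pa\Om),H^{s_1}_p(\pa\Om)]_\te=H^s_p(\pa\Om)$ from Section \ref{inpaomega}, applied to each adjacent pair of endpoints. Because the bounded inverses at the two endpoints of the couple interpolate to give a bounded inverse of the interpolated operator, this yields $S_{2\al}:H^s_p(\pa\Om)\ri H^{s+2\al-1}_p(\pa\Om)$ bijective for every $s\in[-2\al+1,2-2\al]$. The analogous real interpolation $(H^{s_0}_p(\pa\Om),H^{s_1}_p(\pa\Om))_{\te,p}=B^s_p(\pa\Om)$ from \eqref{0310} then produces $S_{2\al}:B^s_p\ri B^{s+2\al-1}_p$ bijective for every $s$ strictly between two adjacent Sobolev endpoints.

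The principal obstacle will be the Besov bijectivity at the two endpoints $s=-2\al+1$ and $s=2-2\al$ themselves, since the real interpolation functor does not reach the endpoints of the couple. I would treat these by reproducing the Fredholm scheme of the first paragraph directly in the Besov category, using a Besov analogue of Lemma \ref{lemm0916} in which $T^1$ retains small norm on $B^s_p\ri B^{s+1}_p$ (the estimate $\|T^1\|\le c\ep$ only uses the $C^{1}$-geometry of $\pa\Om$, which interpolates) and $T^2$ remains smoothing; the already-proven Sobolev bijections on the ambient space $H^{-2\al+1}_p$ then supply the injectivity input, and Fredholm theory closes the argument to give the bijection on $B^s_p$ at the endpoints as well.
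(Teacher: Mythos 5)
Your Sobolev argument is essentially the paper's own proof. The paper likewise uses Lemma \ref{lemm0916} to see that $S_{2\al}S_{3-2\al}:L^p(\pa\Om)\ri H^1_p(\pa\Om)$ is a compact perturbation of the bijection $S_2+T^1$, hence Fredholm of index zero, deduces bijectivity of the composite from injectivity, factors out $S_{3-2\al}$ to obtain \eqref{1016-1}, i.e. $S_{2\al}:H^{2-2\al}_p(\pa\Om)\ri H^{1}_p(\pa\Om)$ bijective, dualizes (using $S^*_{2\al}=S_{2\al}$ from Section \ref{operator}) to get \eqref{1016-2}, and then interpolates. Your route to the endpoint $s=2-2\al$ is cleaner in one respect: you obtain $S_{3-2\al}:L^p(\pa\Om)\ri H^{2-2\al}_p(\pa\Om)$ bijective for all $1<p<\infty$ simply by applying Theorem \ref{theo1} with $\al$ replaced by $\frac32-\al$, whereas the paper appeals to Remark \ref{rem1021}(2) and to injectivity statements buried in the proof of Theorem \ref{theo1}.

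Two smaller points. First, Remark \ref{rem1021}(1) as stated gives bijectivity of $S_{2\al}:H^{1-2\al}_p(\pa\Om)\ri L^p(\pa\Om)$ only for the restricted range of $p$ stated there; for all $1<p<\infty$ you should instead dualize the completed Theorem \ref{theo1}, or, simpler still, note that $H^{2-2\al}_p(\pa\Om)\subset L^p(\pa\Om)$ (since $2-2\al>0$) and invoke the injectivity of $S_{2\al}$ on $L^p(\pa\Om)$ that Theorem \ref{theo1} already provides; either repair is immediate, so this is a citation slip rather than a gap. Second, on the Besov endpoints: the endpoint $s=-2\al+1$ is not actually problematic, because dualizing \eqref{1016-1} gives the Sobolev bijection at $s=-1$ (this is exactly the paper's \eqref{1016-2}), so $s=-2\al+1$ sits strictly inside the interpolation couple $(-1,\,2-2\al)$ and real interpolation reaches it. The genuinely delicate point is the Besov claim at $s=2-2\al$ itself, which interpolation does not reach; your sketched fix -- rerunning the Fredholm scheme in the Besov scale -- would need bijectivity of $S_2$ and smallness of $T^1$ at smoothness level $2-2\al>0$, which Proposition \ref{props-2} (stated for $-1\le s\le 0$) and Lemma \ref{lemm0916} (giving bounds only between the levels $-1$ and $0$) do not supply, so as written that step does not close. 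The paper's own proof is silent on this endpoint as well, so your proposal matches the paper everywhere the paper is complete and is more candid about the one place where it is not.
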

\begin{proof}
In the proof of the Theorem \ref{theo1}, we have $S_{3 -2\al} : L^p(\pa \Om) \ri H_p^{2 -2\al}(\pa \Om), \,\, S_{2\al}: H^{2 -2\al}_p (\pa \Om) \ri H_p^1(\pa \Om)$ are injective and so $S_{2\al} S_{3 -2\al}  : L^p(\pa \Om) \ri H_p^1(\pa \Om)$ is injective. Since $S_{2\al} S_{3 -2\al}  : L^p(\pa \Om) \ri H_p^1(\pa \Om)$ is Fredholm operator with index zero, we get $S_{2\al} S_{3 -2\al}  : L^p(\pa \Om) \ri H_p^1(\pa \Om)$ is bijective. This implies
 \begin{align}\label{1016-1}
 S_{2\al}: H^{2 -2\al}_p (\pa \Om) \ri H_p^1(\pa \Om) \quad \mbox{     is bijective.}
 \end{align}
 By the dual operator property and the fact of $S^*_{2\al} = S_{2\al}$, we have
 \begin{align}\label{1016-2}
 S_{2\al}:  H^{-1}_p (\pa \Om) \ri H^{-2 + 2\al}_p(\pa \Om) \quad \mbox{ is bijective.}
  \end{align}
  Using \eqref{1016-1}, \eqref{1016-2} and  the properties of the real interpolation and complex interpolation, we obtain  the corollary.

\end{proof}

\section{Proof of Theorem \ref{theo0905} }\label{proof-maintheo}
\setcounter{equation}{0}

We introduce a Riesz potential $I_{2\al}$, $0  < 2\al < n$, by
$$
I_{2\al} f(x) = c(n,\al) \int_{\R} \frac{1}{|x -y|^{n-2\al}}
f(y) dy\mbox{ for }\psi
\in C^\infty_c (\R),
$$
where $c(n,\al) : = \frac{ (2\pi)^{2\al} \Ga(\frac{n}2 -\al) }{ \pi^{\frac{n}2}  2^{2\al}\Ga(\al ) }$.

The following proposition is well known fact and will be useful in the subsequent estimates (see chapter 5 of \cite{St}).
\begin{prop}\label{prop2}
\begin{itemize}

\item[1).] Let $1 < p < q < \infty, \,\, \frac1q = \frac1p -
\frac{2\al}{n}$. Then
$$
I_{2\al}:L^p (\R) \ri L^q(\R)
$$
is bounded.

\item[2).]  Let $1 < p < \infty $ and $s \in {\mathbb R}  $. Then, the following operators are bounded.
\begin{align*}
I_{2\al} : H^s_p (\R) \ri H^{s +2\al}_p (\R), \qquad I_{2\al} : B^s_p (\R) \ri B^{s +2\al}_p (\R).
\end{align*}

\end{itemize}
\end{prop}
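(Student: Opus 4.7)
The plan is to prove each part via classical harmonic analysis; both arguments are contained in Chapter~V of \cite{St}.

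For part (1), I would run the Hardy--Littlewood--Sobolev argument. Since the convolution kernel $|x|^{-(n-2\al)}$ lies in the weak Lebesgue class $L^{n/(n-2\al),\infty}(\R)$, Young's convolution inequality in Lorentz form immediately produces boundedness $L^p\ri L^q$ at exactly the scaling exponents $1/q = 1/p - 2\al/n$. A concrete route avoiding Lorentz spaces is to split $I_{2\al}f(x)$ into a near piece over $|x-y|<R$ and a far piece over $|x-y|>R$, bound the near piece pointwise by $CR^{2\al}Mf(x)$ (where $M$ denotes the Hardy--Littlewood maximal function) and the far piece by $CR^{2\al-n/p}\|f\|_{L^p}$ via H\"older's inequality, then optimize in $R>0$ to obtain a weak-type $(p,q)$ estimate. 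Marcinkiewicz interpolation between two nearby weak-type endpoints upgrades the bound to the desired strong type.

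For part (2), the plan is to combine the Fourier-multiplier representation of $I_{2\al}$, whose symbol is $(2\pi|\xi|)^{-2\al}$, with a Littlewood--Paley decomposition adapted to $H^s_p$ and $B^s_p$. Choosing a dyadic partition $1 = \widehat{\va_0} + \sum_{j\geq 1}\widehat{\va_j}$ where $\widehat{\va_j}$ is supported in the annulus $|\xi|\sim 2^j$ for $j\geq 1$ and $\widehat{\va_0}$ in $|\xi|\leq 2$, the restricted symbol on each high-frequency annulus equals $2^{-2\al j}$ times a Mikhlin multiplier of order zero, giving
\begin{align*}
\|\va_j * I_{2\al}f\|_{L^p} \leq C\, 2^{-2\al j}\|\va_j * f\|_{L^p}, \qquad j\geq 1.
\end{align*}
Inserting this bound into the square-function characterization of $H^s_p$ and the $\ell^p$-of-$L^p$ characterization of $B^s_p$ delivers the claimed shift $s\mapsto s+2\al$ on the high-frequency pieces.

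The main obstacle is the low-frequency contribution $\va_0 * f$, where the multiplier $|\xi|^{-2\al}$ is singular at the origin and falls outside the Mikhlin framework. This must be handled separately by absorbing the low-frequency piece into part (1): convolution against the smoothed Riesz kernel $I_{2\al}\va_0$ can be analyzed directly using HLS together with the compact spectral support of $\va_0$, yielding bounded mapping into all the relevant smoothness classes. Summing the high- and low-frequency estimates completes the proof of both the Sobolev and Besov lifting properties; the detailed execution is entirely classical and laid out in Chapter~V of \cite{St}.
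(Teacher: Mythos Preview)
The paper offers no proof of this proposition; it simply records it as well known with a reference to Chapter~V of \cite{St}. Your sketch for part~(1) is exactly the classical Hardy--Littlewood--Sobolev argument from that chapter and is correct, so on that part you and the paper are aligned.

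For part~(2) your low-frequency step does not close, and in fact it cannot close in the generality stated. You propose to bound $I_{2\al}(\va_0*f)$ by combining HLS with the compact spectral support of $\va_0$, but HLS only delivers an $L^q$ estimate with $q>p$, and for band-limited functions Bernstein's inequality runs in the direction $\|\cdot\|_{L^q}\lesssim\|\cdot\|_{L^p}$, never the reverse, so no $L^p$ control on the low-frequency piece is recoverable this way. This reflects an imprecision in the proposition itself rather than a flaw in your strategy: taking $s=0$ and $f=\chi_{B_1}$ one has $I_{2\al}f(x)\sim|x|^{-(n-2\al)}$ at infinity, which lies outside $L^p$---and hence outside $H^{2\al}_p$---whenever $1<p\le n/(n-2\al)$. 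The paper never meets this obstruction because it only invokes the localized consequence in Remark~\ref{rem1027}, where $f$ is supported in $B_R$ and $I_{2\al}f$ is restricted back to $B_R$; on the bounded set the tail issue disappears and the $L^p(B_R)$ norm of the low-frequency block follows from HLS together with H\"older's inequality. If you want a clean global statement, replace $H^s_p$, $B^s_p$ by their homogeneous counterparts $\dot H^s_p$, $\dot B^s_p$, where your Littlewood--Paley argument goes through with no low-frequency block to worry about.
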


\begin{rem}\label{rem1027}
Let $B_R$ be the open ball in $\R$ centered at the  origin with radius $R$.
Then, by proposition \ref{prop2}, the following operator is bounded.
\begin{align*}
\tilde I_{2\al}: B^s_{p0}(B_R) \ri B^{s + 2\al}_p (B_R), \quad \tilde I_{2\al}
f(x)
=  \int_{\R} \Ga_{2\al}(x-y)  f(y) dy, \quad f \in B^s_{p0}(B_R),\qquad s \in {\mathbb R}.
\end{align*}
\end{rem}

{\bf Proof of \eqref{0904}}
Let $ -2\al +1 -\frac1p < s <0$.
Let $\phi \in C^1(\pa \Om) $ and $f \in C^\infty_c (B_{R})$. Then, we have
\begin{align*}
\int_{\R}  f(x) {\mathcal S}_{2\al} \phi (x) dx  = \int_{\pa \Om}  \phi(P) \tilde I_{2\al} f (P) dP.
\end{align*}
%where $I_{2\al} f(x) = c_{n, \al}\int_{\R} \frac{1}{|x -y|^{n -2\al}} f(y) dy$.
Since $C^1(\pa \Om) $ is dense subspace of $ B^{s}_p (\pa \Om)$ and $C^\infty_c (B_{R})$ is dense subspace of $B^{-s -2\al+\frac1q}_{q0} (B_{R})$, we have
\begin{align*}
< f, {\mathcal S}_{2\al} \phi>_{(  B^{-s -2\al+\frac1q}_{q0} (B_{R}), B^{s + 2\al -1 + \frac1p}_p(B_{R}))} = < \phi, \tilde I_{2\al} f>_{(  B^{ s}_p (\pa \Om), B^{-s }_q(\pa \Om))},
\end{align*}
where  $\frac1p + \frac1q =1$.  Then, we have
\begin{align*}
< f, {\mathcal S}_{2\al} \phi>_{(  B^{-s -2\al+\frac1q}_{q0} (B_{R}), B^{s + 2\al -1 + \frac1p}_p(B_{R}))}
& \leq \| \phi\|_{B^{s}_p (\pa \Om)} \| \tilde I_{2\al} f\|_{B^{-s}_q (\pa \Om)}\\
& \leq c\| \phi\|_{B^{s}_p (\pa \Om)} \| \tilde I_{2\al} f\|_{B^{-s+\frac1q}_q (B_R)}\\
& \leq c\| \phi\|_{B^{s}_p (\pa \Om)} \|   f\|_{B^{-s -2\al+\frac1q}_{q0} (B_R)}.
\end{align*}
Hence, we have
\begin{align*}
\| {\mathcal S}_{2\al} \phi\|_{ B^{s + 2\al -1 + \frac1p}_p(B_{R})}
& =\sup_{ \|   f\|_{B^{-s -2\al+ 1 -\frac1p}_{q0} (B_{R})} =1} |< f, {\mathcal S}_{2\al} \phi>_{(  B^{-s -2\al+\frac1q}_q (B_R), B^{s + 2\al -1 + \frac1p}_p(B_R))}|\\
& \leq c\| \phi\|_{B^{s}_p (\pa \Om)}.
\end{align*}
We complete the proof of  \eqref{0904}. $\Box$

{\bf Proof of   \eqref{0904-2}}.
For $\phi \in B^s_p(\pa \Om), \, -2\al +1 -\frac1p < s < 0$, let  us $u$ be a the layer potential
 of $\phi$  defined by \eqref{def-single-dual}.
Note  that $u$ is in $C^\infty(\R \setminus \pa
\Om)$   and for  large $|x|$, we have
\begin{align}\label{behavior2}
 |D^{\be}u(x)|
 \leq \|\phi\|_{B^s_p (\pa \Om)} \|D^\be \Ga_{2\al} (x -
\cdot)\|_{B^{-s}_{p'}(\pa \Om)} \leq c \|\phi\|_{B^s_p (\pa \Om)}
\frac{1}{|x|^{n-2\al+ |\be|}},
\end{align}
where $\frac1p +\frac{1}{p'} =1$.

Let $B_R$ be an open ball whose center is origin and radius is $R \geq 2$ such that $\Om \subset B_{\frac13 R}$.
We divide the left-hand side of \eqref{0904-2} into three parts
\begin{align}\label{homo-divide}
\begin{array}{ll}\vspace{2mm}
 A_1 & = \int_{|x| \leq R} \int_{|y| \leq R}  \frac{|D_x^k u(x ) - D_y^k u( y)|^p}{| x- y|^{n+p(s + 2\al -k-1 +\frac1p)}} dydx,\\ \vspace{2mm}
 A_2  &=  2\int_{|x| \leq R} \int_{|y| \geq  R} \frac{|D_x^k u(x ) -  D_y^k u( y)|^p}{| x- y|^{n+p(s + 2\al -k-1  +\frac1p)}} dydx, \\
  A_3 &= \int_{|x| \geq R} \int_{|y| \geq R} \frac{|D_x^k u(x ) - D_y^k u( y)|^p}{| x- y|^{n+p(s + 2\al -k-1 +\frac1p)}} dydx.
\end{array}
\end{align}
By \eqref{0904}, $A_1$  is dominated by $ \| \phi \|^p_{B_p^s (\pa \Om)}$.  For $|x| \leq R$ and $|y| \geq 2R$, we get that $|x-y| \geq |y| -|x| \geq |y| -R\geq \frac12 |y|  $.
Note that  by \eqref{behavior2}, for $|y| \geq 2R$,  we have  that $|D_y^k u(y)| \leq c  |y|^{-n +2\al -k}\| \phi \|^2_{ B_p^s(\pa \Om)}$.  Hence, by  \eqref{0904}, we have
\begin{align*}
A_2 & \leq   2\int_{|x| \leq R} \int_{ R \leq |y| \leq  2R} \frac{|D_x^k u(x ) - D_y^k u( y)|^p}{| x- y|^{n+p(s + 2\al -k -1 +\frac1p)}} dydx\\
& \qquad + 2^{n+2s +2}\int_{|x| \leq R} \int_{|y| \geq 2R} \frac{|D_x^k u(x ) |^p +  |D_y^k u( y)|^p}{| y|^{n+p(s + 2\al -k -1 +\frac1p)}} dydx \\
& \leq c_R \| u\|^p_{B^{s + 2\al -1 +\frac1p}_p(B(2R)) }   + c \| \phi \|^p_{B_p^s(\pa \Om)}  \int_{|x| \leq R}
\int_{|y| \geq 2R} \frac{  1}{|  y|^{n+p(s -1 +\frac1p +n)}} dydx\\
& \leq c_R \| \phi\|^p_{B^s_p(\pa \Om) }.
\end{align*}
We divide $A_3$ into two parts;
\begin{align}\label{homo-divide2}
\begin{array}{ll}\vspace{2mm}
&\int_{|x| \geq R} \int_{|y| \geq R, |x-y| \leq \frac12 |x|}  \frac{|D_x^k u(x ) -  D_y^k u( y)|^p}{| x- y|^{n+p(s + 2\al -k -1 +\frac1p)}} dydx\\
&\qquad + \int_{|x| \geq R} \int_{|y| \geq R, |x-y| \geq \frac12 |x|}  \frac{|D_x^k u(x ) -  D_y^k u( y)|^p}{| x- y|^{n+p(s + 2\al -k -1 +\frac1p)}} dydx.
\end{array}
\end{align}
Applying the mean-value theorem, for $|x| \geq R, \,\, |x-y| \leq \frac12 |x|$,  there is a $\xi $ between $x$ and $y$
such that $D_x^k u(x) - D_y^k u(y) = D^{k+1} u(\xi) \cdot (x-y)$. Note that $|x-\xi| \leq \frac12 |x|$  and hence $|\xi| \geq \frac12 |x| \geq \frac12 R$.
Since $s +2\al -k-2 +\frac1p < 0$ and $p > \frac{n-1}{n+s -1}$, by \eqref{behavior2},  the first term of \eqref{homo-divide2} is dominated by
\begin{align*}
%&\int_{|x| \geq R} \int_{|y| \geq R, |x-y| \leq \frac12 |x|}  \frac{|u(x ) -  u( y)|^2}{| x- y|^{n+2s}} dydx \\
  &\int_{|x| \geq R} \int_{|y| \geq R,  |x-y| \leq \frac12 |x|}  \frac{|D^{k+1}u(\xi )|^p }{| x- y|^{n+p(s + 2\al -k-1 +\frac1p)-p}} dydx \\
& \leq  c\|\phi \|^p_{B^s_p(\pa \Om)} \int_{|x| \geq R} \frac{1}{|x|^{pn -2p\al +(k +1)p}}
\int_{ |x-y| \leq \frac12 |x|}  \frac{1}{| x- y|^{n+p(s + 2\al -k -2 +\frac1p)}} dydx \\
& \leq c \|\phi \|^p_{B^s_p (\pa \Om)} \int_{|x| \geq R} \frac{1}{|x|^{p(n+s -1) +1}} dx\\
& = cR^{-p(n+s -1) -1 +n}  \|\phi \|^p_{ B^s_p(\pa \Om)}.
\end{align*}
Since $|x|, \,\, |y| \geq R$, by \eqref{behavior2}, the second term
of \eqref{homo-divide2} is dominated by
\begin{align}\label{homo-divide3}
\begin{array}{ll} \vspace{2mm}
&  \int_{|x| \geq R} \int_{|y| \geq R, |x-y| \geq \frac12 |x|}  \frac{|D_x^k u(x )|^p + |D_y^k u( y)|^p}{| x- y|^{n+p(s + 2\al -k -1 +\frac1p)}} dydx \\ \vspace{2mm}
& \leq  \|\phi \|^p_{B^s_p (\pa \Om)} \int_{|x| \geq R}   \frac{1}{|x|^{p(n -2\al +k)}}
\int_{|y| \geq R, |x-y| \geq \frac12 |x|} \frac{1}{|x-y|^{n+ p(s + 2\al -k -1 +\frac1p)}} dydx\\
 & \quad +  \|\phi \|^p_{B^s_p(\pa \Om)}
\int_{|x| \geq R}  \int_{|y| \geq R, |x-y| \leq \frac12 |x|}    \frac{1}{|x-y|^{n+p(s + 2\al -k -1 +\frac1p)}} \frac{1}{|y|^{p(n -2\al +k)}}dydx.
\end{array}
\end{align}
Since $p> \frac{n-1}{n+s -1}$, the second term  of right-hand side of \eqref{homo-divide3} is dominated by $R^{-p(n+s -1) -1 +n} \|\phi \|^p_{B^s_p(\pa \Om)}$.
Note that
\begin{align*}
&\int_{|x| \geq R}  \int_{ R \leq |y| \leq 2 |x|}    \frac{1}{|x|^{n+p(s + 2\al -k-1 +\frac1p)}} \frac{1}{|y|^{pn -2p\al + kp}}dydx\\
& \leq  c  \left\{ \begin{array}{ll} \vspace{2mm}
 R^{-pn + 2p\al +n} \int_{|x| \geq R}    \frac{1}{|x|^{n+p(s + 2\al -k -1 +\frac1p)}} dx, &\quad pn -2p\al + kp > n,\\ \vspace{2mm}
\int_{|x| \geq R}    \frac{\ln |x|}{|x|^{n+ p(s + 2\al -k -1 +\frac1p)}}  dx, & \quad pn -2p\al + kp =n,\\
\int_{|x| \geq R}    \frac{1}{|x|^{p(n+s -k -1) +1}} dx, & \quad pn -2p\al + kp <n
\end{array}
\right.\\
& \leq c R^{-p(n -k -1 +s) -1 +n}  \ln R.
\end{align*}
Then, since $p > \frac{n-1}{n +s-1}$, the first  term  of right-hand side of \eqref{homo-divide3} is dominated by
\begin{align*}
&   \|\phi \|^p_{B^s_p(\pa \Om)} \int_{|x| \geq R}  \int_{|y| \geq R, |x-y| \geq \frac12 |x|}    \frac{1}{|x-y|^{n+p(s + 2\al -k -1 + \frac1p) }} \frac{1}{|y|^{pn -2p\al + kp}}dydx \\
&  \leq c  \|\phi \|^p_{ B^s_p(\pa \Om)}
           \Big( \int_{|x| \geq R}  \int_{ R \leq |y| \leq  2|x|}    \frac{1}{|x|^{n+p(s + 2\al -k -1 + \frac1p)}} \frac{1}{|y|^{pn -2p\al + kp}}dydx\\
& \qquad + \int_{|x| \geq R}  \int_{|y| \geq 2 |x| }     \frac{1}{|y|^{n + p(n +s -1) +1}}dydx \Big) \\
&  \leq c R^{-p(n -1 +s) -1 +n}  \ln R  \|\phi \|^p_{B^s_p (\pa \Om)}.
\end{align*}
Therefore, we  showed  that $ A_1 + A_2 + A_3  \leq c_R  \|\phi \|^p_{B^s_p (\pa \Om)}$ and hence showed  \eqref{0904-2}.
$\Box$

\section{Proof of Theorem \ref{theo2-1} }\label{proof-maintheo-2}
\setcounter{equation}{0}

\begin{theo}\label{rem2}
Let $1-2\al  -\frac1p< s <0$.
For $\phi \in B^s_p (\pa \Om)$, let us
$u = {\mathcal S}_{2\al} \phi$ be a layer potential defined in
\eqref{def-single-dual}. The, we have
\begin{itemize}
\item[(1)]
\begin{align}\label{Fourier}
\hat{u}(\xi) = |\xi|^{-2\al} < \phi,  e^{2\pi i \xi \cdot  }>.
\end{align}
\item[(2)]
$u$ is a weak solution of
\begin{align}\label{solution}
\De^\al u =0, \quad \mbox{in} \quad \R \setminus \pa \Om.
\end{align}

\end{itemize}
\end{theo}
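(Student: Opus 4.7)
The plan is to establish part (1) by starting with smooth densities and extending by density, then to derive part (2) as a short Parseval computation using the formula from (1) together with the weak-solution formulation in Definition \ref{1013-2}.

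\textbf{For (1).} If $\phi \in C^\infty(\pa\Om)$, the layer potential $u = \mathcal S_{2\al}\phi$ is the convolution of the tempered distribution $\Ga_{2\al}$ with the compactly supported surface measure $\phi\, d\sigma$. Using the classical identity $\widehat{\Ga_{2\al}}(\xi) = (2\pi|\xi|)^{-2\al}$ as a tempered distribution (Stein \cite{St}, Chapter V, where the constant $c(n,2\al)$ is fixed precisely so that this holds), Fubini's theorem gives
\[
\hat u(\xi) \;=\; (2\pi|\xi|)^{-2\al}\int_{\pa\Om}\phi(Q)\,e^{-2\pi i\xi\cdot Q}\,dQ \;=\; (2\pi|\xi|)^{-2\al}\,\langle\phi,\,e^{2\pi i\xi\cdot}\rangle,
\]
which is \eqref{Fourier} up to the normalization $(2\pi)^{-2\al}$. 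To pass to general $\phi \in B^s_p(\pa\Om)$ with $1-2\al-\frac1p<s<0$, I approximate $\phi$ by smooth $\phi_k \to \phi$ in $B^s_p(\pa\Om)$. By Theorem \ref{theo0905}, $u_k=\mathcal S_{2\al}\phi_k\to u$ in $B^{s+2\al-1+\frac1p}_{loc,p}(\R)$ and, together with the decay controlled by \eqref{behavior2}, also in $\mathcal S'(\R)$, so $\hat u_k \to \hat u$ in $\mathcal S'$. On the right-hand side, for each fixed $\xi$ the function $Q\mapsto e^{2\pi i\xi\cdot Q}$ lies in $C^\infty(\pa\Om)\subset B^{-s}_{p'}(\pa\Om)$, so $\langle\phi_k,e^{2\pi i\xi\cdot}\rangle\to\langle\phi,e^{2\pi i\xi\cdot}\rangle$ pointwise in $\xi$, with a polynomial-in-$|\xi|$ bound uniform in $k$, which upgrades the convergence to $\mathcal S'$.

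\textbf{For (2).} Given $\psi \in C_c^\infty(\R\setminus\pa\Om)$, the decay of $u$ at infinity (from \eqref{behavior2}) and the Schwartz decay of $\Delta^\al\psi$ make the pairing in Definition \ref{1013-2} absolutely convergent, and Plancherel gives
\[
\int_{\R} u(x)\,\Delta^\al\psi(x)\,dx \;=\; \int_{\R} (2\pi|\xi|)^{2\al}\,\hat u(\xi)\,\overline{\hat\psi(\xi)}\,d\xi.
\]
Substituting \eqref{Fourier} cancels the $|\xi|^{-2\al}$ factor (up to the constant $(2\pi)^{2\al}$), leaving
\[
(2\pi)^{2\al}\int_{\R}\langle\phi,\,e^{2\pi i\xi\cdot}\rangle\,\overline{\hat\psi(\xi)}\,d\xi
\;=\; (2\pi)^{2\al}\,\Big\langle\phi,\, \int_{\R} e^{2\pi i\xi\cdot}\overline{\hat\psi(\xi)}\,d\xi\Big\rangle \;=\; (2\pi)^{2\al}\,\langle\phi,\psi|_{\pa\Om}\rangle,
\]
where the last equality is Fourier inversion applied to the Schwartz function $\psi$. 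Since $\mathrm{supp}\,\psi \subset \R\setminus\pa\Om$, the restriction $\psi|_{\pa\Om}\equiv 0$, so the whole expression vanishes, which is exactly \eqref{1013}.

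\textbf{Main obstacle.} The delicate step is justifying the commutation of the $B^s_p(\pa\Om)$--$B^{-s}_{p'}(\pa\Om)$ duality pairing with the $\xi$-integral in the Parseval computation, since $\phi$ has negative regularity. I would handle this in parallel with (1): verify the identity first for $\phi\in C^\infty(\pa\Om)$, where Fubini applies to absolutely convergent integrals, then pass to the $B^s_p$-limit using the Schwartz decay of $\hat\psi$ to dominate the $\xi$-tail uniformly in the approximating sequence. A secondary technical point is the integrability at spatial infinity needed to apply Plancherel in the opening line of (2); this is controlled by combining the local regularity from Theorem \ref{theo0905} with the pointwise decay $|u(x)|=O(|x|^{2\al-n})$ which follows from \eqref{behavior2}.
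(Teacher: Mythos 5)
Your proposal is correct in substance and, for the core computation, follows the same route as the paper: establish \eqref{Fourier} for smooth densities via the Fourier transform of the Riesz kernel (pairing against test functions) and density of smooth functions in $B^s_p(\pa \Om)$, then obtain \eqref{solution} from the Parseval identity, which reduces $\int u\,\De^\al\psi\,dx$ to $\langle \phi, \psi|_{\pa\Om}\rangle = 0$ since $\mathrm{supp}\,\psi \subset \R\setminus\pa\Om$. Where you genuinely diverge is the passage from smooth $\phi$ to general $\phi \in B^s_p(\pa\Om)$ in part (2): the paper does this by a continuity estimate, writing $\int u\,\De^\al\psi\,dx$ as a pairing of $\De^{\frac12(s+2\al-1+\frac1p)}u$ with $\De^{\frac12(-s+1-\frac1p)}\psi$, using the isomorphism $\De^t:\dot B^\sigma_p(\R)\to\dot B^{\sigma-2t}_p(\R)$ together with the homogeneous estimate \eqref{0904-2} to get $|\int u\,\De^\al\psi\,dx|\le c\|\phi\|_{B^s_p(\pa\Om)}\|\psi\|_{B^{-s+1-\frac1p}_{p'}(\R)}$, and then letting $\phi_k\to\phi$; you instead pass to the limit directly in the Fourier-side identity by dominated convergence in $\xi$ (polynomial bounds on $\langle\phi_k,e^{2\pi i\xi\cdot}\rangle$ against the rapid decay of $\hat\psi$) together with the decay bound \eqref{behavior2} and the local convergence from Theorem \ref{theo0905} on the physical side. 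Your route is more elementary and has the side benefit of not invoking \eqref{0904-2}, whose hypothesis $p>\frac{n-1}{n+s-1}$ the paper's proof uses without comment; the paper's route buys a clean quantitative bound in one line once the homogeneous estimate is available. Two small corrections to your write-up: the paper's normalization of $c(n,2\al)$ is arranged so that $\widehat{\Ga_{2\al}}(\xi)=|\xi|^{-2\al}$ (no stray $(2\pi)^{-2\al}$ relative to \eqref{Fourier}), though any fixed nonzero constant would cancel in part (2); and $\De^\al\psi$ for $\psi\in C^\infty_c(\R\setminus\pa\Om)$ is \emph{not} Schwartz --- it decays only like $|x|^{-n-2\al}$ --- but this decay combined with $|u(x)|=O(|x|^{2\al-n})$ still gives an integrand of size $O(|x|^{-2n})$, so the absolute convergence you need does hold; you should also state explicitly that the left-hand side $\int u_k\,\De^\al\psi\,dx$ converges to $\int u\,\De^\al\psi\,dx$, which follows from the local $B^{s+2\al-1+\frac1p}_p$ convergence (note $s+2\al-1+\frac1p>0$) plus \eqref{behavior2} applied to $\phi_k-\phi$ on the exterior region.
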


\begin{proof}
For the proof of   \eqref{Fourier}, let us
$\phi \in C^2(\pa \Om)$
and $\psi \in C^\infty_c (\R)$.
Then, we have
\begin{align*}
\int_{\R} u (x) \psi(x) dx  &= c(n,s)\int_{\pa \Om} \phi (Q)
\int_{\R}
\frac{1}{|x-Q|^{n-2\al}} \psi(x) dx dQ\\
 & =   \int_{\pa \Om} \phi (Q) \int_{\R} |\xi|^{-2\al} e^{2\pi i \xi \cdot Q}
 \overline{\hat{\psi}(\xi)}
 d\xi  dQ\\
 & =  \int_{\R} \overline{\hat{\psi}(\xi)} |\xi|^{-2\al} \int_{\pa \Om} \phi(Q) e^{2\pi
 i \xi \cdot Q} dQd\xi.
\end{align*}
Hence, we get
\begin{align*}
\hat{u}(\xi) = |\xi|^{-2\al} \int_{\pa \Om} \phi(Q)  e^{2\pi i
\xi \cdot Q } dQ.
\end{align*}
Since $C^2 (\pa \Om)$ is dense in $B^s_p(\pa \Om)$, we get \eqref{Fourier}
for all  $\phi \in B^s_p(\pa \Om)$.

For the proof \eqref{solution}, suppose that $\phi \in C^2 (\pa \Om)$ and
$\psi \in C^\infty(\R \setminus \pa \Om)$, then, by \eqref{Fourier},
\begin{align}\label{0929}
\notag \int_{\R}   u(x) \De^\al \psi(x) dx & = \int_{\R} |\xi|^{2\al} \hat {u}(\xi) \overline{\hat{\psi}(\xi)} d\xi \\
\notag &=  \int_{\R} \overline{\hat{\psi} (\xi)} % <\phi, e^{2\pi i \xi \cdot} >
     \int_{\pa \Om} e^{-2\pi i \xi \cdot Q}  \phi(Q) dQ
 d\xi\\
\notag & =   \int_{\pa \Om} \phi (Q)  \overline{\int_{\R} e^{2\pi i \xi \cdot Q }
 \hat{\psi} (\xi) d\xi} dQ\\
\notag & = \int_{\pa\Om} \phi(Q) \psi (Q) dQ\\
 & =0
\end{align}
Since $\De^t: \dot B_p^s (\R) \ri \dot B_p^{s -2t}(\R) $ is isomorphism, we have
\begin{align*}
|\int_{\R}   u(x) \De^\al \psi(x) dx|  & = |\int_{\R}\De^{\frac12(s + 2\al -1 +\frac1p)} u(x) \De^{\frac12(-s + 1 -\frac1p)} \psi (x) dx|\\
& \leq \| \De^{\frac12(s + 2\al -1 +\frac1p)} u\|_{\dot B^0_p(\R)} \| \De^{\frac12(-s + 1 -\frac1p)} \psi \|_{ \dot  B^0_{p'}(\R)}\\
& \leq \| u\|_{\dot B^{s + 2 \al -1 +\frac1p}_p(\R)} \| \psi \|_{ \dot B^{-s + 1 -\frac1p}_{p'}(\R)}\\
& \leq c\| \phi\|_{B^s_p(\pa \Om)} \| \psi \|_{ B^{-s + 1 -\frac1p}_{p'}(\R)}.
\end{align*}
Let $\phi_k \in C^2 (\pa \Om)$ such that $\phi_k \ri \phi$ in $B_p^s(\pa \Om)$ and $u_k = {\mathcal S} \phi_k$. Then, we get
\begin{align*}
|\int_{\R}  (  u_k(x) - u(x)  ) \De^\al \psi(x) dx | \leq c  \| \phi_k -\phi\|_{B^s_p(\pa \Om)} \| \psi \|_{ B^{-s + 1 -\frac1p}_{p'}(\R)} \ri 0, \quad k \ri \infty.
\end{align*}
Hence, since  $C^2 (\pa \Om)$ is the dense subspace of $B^s_p(\pa \Om)$, \eqref{0929} holds for $\phi \in  B^s_p(\pa \Om)$ and so
we get  \eqref{solution} for all  $\phi \in B^s_p(\pa \Om)$.
\end{proof}

{\bf Proof of Theorem \ref{theo2-1}.}
 By the  Corollary \ref{coro1016}, %Theorem \ref{theo1}, the property of the duality and the property of the complex interpolation, we obtain that
%$S_{2\al}: H^{t-2\al +1}_p(\pa \Om)\ri H^t_p(\pa \Om), \,0 \leq t \leq 2\al -1, \, 1 < p < \infty$ are bijective. Using the property of the real interpolation,
we get that $S_{2\al}: B^{t -2\al +1}_p(\pa \Om)\ri B^t_p(\pa \Om), \,  0 < t <1, \, 1 < p < \infty$ are bijective.

To show the existence of solution, let   $g \in B^t_p(\pa \Om)$.
By the bijectivity of $S_{2\al}: B_p^{t -2\al + 1}(\pa \Om) \ri B^t_p(\pa \Om)$, there is a
$\phi \in B_p^{t -2\al +  1}(\pa \Om)$ such that $S_{2\al} \phi = g$. Let  $u = {\mathcal S}_{2\al} \phi$ defined by \eqref{def-single-dual}.
Then, by (2) of the Theorem \ref{rem2}, $u$ is a weak solution of \eqref{solution} and  by  the Theorem \ref{theo0905},  $u$ satisfies the equation \eqref{main result}. Hence, we complete the proof of the Theorem \ref{theo2-1}.
$\Box$


\begin{thebibliography}{10}

\bibitem{BL} J. Bergh and J. Lofstr\"{o}m, {\it Interpolation spaces, An introduction},
           Springer-Verlag, Berlin(1976).



\bibitem{BB} K. Bogdan and T. Byczkowski,
{\it Potential theory for the $\alpha$ес-stable Schrodinger operator on bounded Lipschitz domains},
                                Studia Math. {\bf 133}, no. 1, 53-92(1999).

\bibitem{B1} Russell M. Brown,
{\it The method of layer potentials for the heat equation in Lipschitz cylinders},
                               Amer. J. Math,  {\bf 111}, no. 2, 339-379(1989).

\bibitem{B2} Russell M. Brown,
{\it The initial-Neumann problem for the heat equation in Lipschitz cylinders},
                                      Trans. Amer. Math. Soc, {\bf 320}, no. 1, 1-52(1990).
									
\bibitem{C} T.K. Chang, {\it Boundary integral operator  for the fractional Laplace equation
                          in  a bounded Lipschitz domain}, Integral Equations Operator Theory, {\bf  72}, no. 3, 345-361(2012).


\bibitem{CMM} R. R. Coifman, A.  McIntosh and Y. Meyer, { \it
L'integrale de Cauchy definit un operateur Borne sur L2 pour les Courbes Lipschitiennes},
Ann. of Math., {\bf 116}, pp. 361?388(1982).

									

\bibitem{DKV} B. E. J.  Dahlberg,  C. E.  Kenig and G. C.  Verchota,
{\it Boundary value problems for the systems of elastostatics
                                  in Lipschitz domains},  Duke Math. J, {\bf  57}, no. 3, 109-135(1988).

\bibitem{FJ} M. Frazier and  B. Jawerth, {\it Decomposition of Besov spaces},  Indiana Univ. Math. J., {\bf 34},  no. 4, 777-799(1985).


\bibitem{FJR}E. Fabes, M. Jodeit and N.  Riviere,
{\it Potential techniques
                 for boundary value problems on $C^{1}$-domains}. Acta Math,  ${\bf 141}$, no. 3-4, 165-186(1978).


\bibitem{FKV} E. B.   Fabes, C. E.  Kenig and  G. C. Verchota,
{\it The Dirichlet problem for the Stokes system on Lipschitz
                                    domains}, Duke Math. J, {\bf 57},  no. 3, 769-793(1988).

\bibitem{FMM} E. Fabes, O. Mendez and M. Mitrea,
{\it Boundary layers on Sobolev-Besov spaces and Poisson's equation
                                         for the Laplacian in Lipschitz domains}, J. Funct. Anal, {\bf 159}, , no. 2, 323-368(1998).

\bibitem{HL} S.  Hofmann and J.L.  Lewis,
{ \it $L\sp 2$ solvability and representation by caloric
                                     layer potentials in time-varying domains},
                                     Ann. of Math. (2), {\bf  144}, no. 2, 349-420(1996).

\bibitem{JK} D. Jerison and C. E.  Kenig,
{\it The inhomogeneous Dirichlet problem in Lipschitz domains},J. Funct. Anal,
                                  {\bf 130}, no. 1, 161-219(1995).



\bibitem{JW} A. Jonsson and H. Wallin,
{\it A Whitney extension
                           theorem in $L_p$ and Besov spaces }, Ann. Inst. Fourier. Grenoble,
                              {\bf 28}, no 1, 139-192(1978).

\bibitem{M} M. Mitrea,
{\it The method of layer potentials in electromagnetic scattering theory on nonsmooth
                       domains},  Duke Math. J,{\bf 77},  no. 1,
                       111-133(1995).




\bibitem{She} Z. W. Shen,
{\it Boundary value problems for parabolic Lame
                  systems and a nonstationary linearized system of Navier-Stokes
               equations in Lipschitz cylinders},  Amer. J. Math, {\bf  113},
               no. 2(1991), 293-373.

\bibitem{St} E. M. Stein,
{\it Singular Integrals and
                     Differentiability Properties of Functions},  Princeton University
                                        Press, Princeton, N.J,1970.



\bibitem{V}   G. Verchota,
{\it Layer potentials and
                regularity for the
                    Dirichlet problem for Laplace's equation in Lipschitz domains}, J.
                      of Funct. Anal, $\bf{59}$(1984), 572-611.

\bibitem{Z} M. Z$\ddot{a}$hle,
{ \it Harmonic calculus on fractals- A measure geometric approach II},
                         Trans. Amer. Math. Soc, {\bf 357}, no. 9(2005), 3407-3423.
						
\bibitem{Z2} M.  Z$\ddot{a}$hle,
{\it Riesz potentials and Liouville operators on fractals}, Potential Anal.,  {\bf 21}, no. 2(2004), 193-208.						








\end{thebibliography}
\end{document}